\newcommand{\bfF}{\mathbf{F}}
\newcommand{\bfM}{\mathbf{M}}
\crefname{hypothesis}{Hypothesis}{Hypotheses}
\title{Provably realizability-preserving finite volume method for quadrature-based moment models of kinetic equations\thanks{This work was partially supported by Shenzhen Science and Technology Program (No.~RCJC20221008092757098), National Natural Science Foundation of China (No.~12171227), and the Deutsche Forschungsgemeinschaft (DFG, German Research Foundation) - SPP 2410 \textit{Hyperbolic Balance Laws in Fluid Mechanics: Complexity, Scales, Randomness (CoScaRa)}.}
}
\author{Chuan Fan\thanks{Department of Mathematics, Southern University of Science and Technology, Shenzhen, Guangdong 518055, China(\email{fanc@sustech.edu.cn}).} 
\and Qian Huang\thanks{Institute of Applied Analysis and Numerical Simulation, University of Stuttgart, Stuttgart, 70569, Germany,(\email{qian.huang@mathematik.uni-stuttgart.de; hqqh91@qq.com}).} 
\and Kailiang Wu\thanks{Corresponding author. Department of Mathematics and Shenzhen International Center for Mathematics, Southern University of Science and Technology, Shenzhen, Guangdong 518055, China (\email{wukl@sustech.edu.cn}).} 
} 
\begin{document}
	
\maketitle
\begin{abstract}	
Quadrature-based moment methods (QBMM) provide tractable closures for multiscale kinetic equations, with diverse applications across aerosols, sprays, and particulate flows, etc. However, for the derived hyperbolic moment-closure systems, seeking numerical schemes preserving \emph{moment realizability}
is essential yet challenging due to strong nonlinear coupling and the lack of explicit conservative-to-flux maps.  
This paper proposes and analyzes a provably realizability-preserving finite-volume method for five-moment systems closed by the two-node Gaussian-EQMOM and three-point HyQMOM. 
Rather than relying on kinetic fluxes, we recast the realizability condition into a nonnegative quadratic form in the moment vector, reducing the original nonlinear constraints to bilinear inequalities amenable to analysis. 
On this basis, we construct a tailored Harten--Lax--van Leer (HLL) flux with rigorously derived wave speeds and intermediate states that embed realizability directly into the flux evaluation. 
We prove sufficient realizability-preserving conditions under explicit Courant--Friedrichs--Lewy (CFL) constraints in the collisionless case, and for BGK relaxation, we obtain coupled time-step conditions involving a realizability radius; a semi-implicit BGK variant inherits the collisionless CFL.  
From a \emph{multiscale} perspective, the analysis yields stability conditions uniform in the relaxation time and supports stiff-to-kinetic transitions. A practical limiter enforces strict realizability of reconstructed interface states without degrading accuracy. Numerical experiments demonstrate the accuracy, robustness in low-density regions, and realizability for both closures. This framework unifies realizability preservation for solving hyperbolic moment systems with complex closures and extends naturally to higher-order space--time discretizations.
 
\end{abstract}
	
\begin{keywords}
multiscale kinetic equations,
moment method,
realizability,
finite volume method,
positivity preserving scheme 
\end{keywords}

\begin{MSCcodes}
	65M60, 65M12, 82B40, 82C40, 35L60
\end{MSCcodes}

\section{Introduction}
Kinetic theories are widely used in modeling diverse interacting systems such as rarefied gas dynamics \cite{grad1949kinetic,xu2010unified}, dispersed multiphase flows \cite{williams1958spray}, plasma physics \cite{lenard1960bogoliubov}, and stellar dynamics \cite{lynden1967statistical}. They describe the evolution of a problem-specific particle number density function (NDF), which depends on space-time coordinates, velocity, and possibly other intrinsic variables. Due to the high dimensionality, various model reduction approaches have been developed to reduce computational cost while retaining essential physics, including the BGK model \cite{BGK1954}, discrete velocity methods \cite{mieussens2000discrete}, and moment methods \cite{grad1949kinetic,levermore1996moment}.
In the moment method, the NDF is projected onto a finite set of moments, yielding PDE systems for macroscopic observables. Beyond their role as computational strategies for kinetic equations, moment methods provide natural frameworks for deriving hydrodynamic theories of fluid-like systems and are thus inherently {\em multiscale}. However, the resulting system always requires \textit{moment closure} to express higher-order moments in terms of the retained ones. Common closures include the Maxwellian closure \cite{sahu2020full}, Grad’s method \cite{grad1949kinetic}, maximum-entropy closures \cite{dreyer1987maximisation, levermore1996moment, ruggeri1993extended}, and quadrature-based moment methods (QBMM) \cite{chen2025poisson, fox2008quadrature, fox2018conditional, fox2009higher, marchisio2005solution, patel2019three}, the latter reconstructing the NDF from a small number of discrete velocities and weights. Owing to their flexibility in representing highly non-equilibrium distributions and capturing multi-modal features with relatively few moments, QBMM have become a powerful and versatile tool, and form the focus of the present work. 

The accuracy and stability of moment-closure systems rely on two vital properties: hyperbolicity, which ensures the equations are well-posed for wave propagation, and moment realizability. A major advantage of QBMM, especially the original quadrature method of moments (QMOM), lies in their intrinsic realizability property: if the given moment vector is realizable, the associated quadrature weights are nonnegative, ensuring that higher-order moments reconstructed from the quadrature remain within the realizable set. Previous works on QBMM therefore concentrated on analyzing the hyperbolicity of the resultant moment closure systems. It was shown that the original QMOM lacks global hyperbolicity for Boltzmann-type kinetic equations \cite{chalons2012beyond,huang2020stability}. Two notable extensions were proposed in 1D to address this issue: Gaussian-EQMOM, which approximates the NDF by the weighted sum of Gaussian distributions sharing a common variance \cite{chalons2017multivariate}, and HyQMOM , which uses more delta functions than the number of transported moments, introducing extra degrees of freedom to parameterize part of the quadrature from low-order moments \cite{fox2018conditional,fox2022hyqmom}. Both approaches restore global hyperbolicity and expand the applicability of QBMM to various problems \cite{TAUNAY2023111700}.

In addition to ensuring realizability at the closure level, it is equally important to preserve this property when solving the resulting moment-closure systems. This requirement is analogous to positivity preservation for the compressible Euler and Navier–Stokes equations, where quantities such as density, pressure, and internal energy must remain positive to maintain physical consistency. Numerical violations of these constraints often lead to instability or even blow-up of the solver \cite{ZS2}. 
Over the past decades, numerous bound-preserving methods have been developed for hyperbolic conservation laws. The seminal work of Zhang and Shu \cite{ZS1,ZS2} introduced a unified convex decomposition approach for constructing positivity-preserving discontinuous Galerkin (DG) and finite volume schemes, which was later extended to the compressible Navier–Stokes equations \cite{Zxx} and to ideal magnetohydrodynamics \cite{wu2018positivity,wu2019provably} by coupling positivity preservation with discrete divergence-free constraints. Cui, Ding, and Wu further established an optimal convex decomposition theory \cite{cui2023classic,cui2024optimal} to improve the CFL constraint and efficiency.
To address the challenges posed by nonlinear constraint preservation, Wu and Shu proposed the geometric quasi-linearization (GQL) framework \cite{wu2023geometric}, which recasts nonlinear constraints as linear ones. Other important advances include bound-preserving exponential Runge–Kutta schemes for stiff systems \cite{huang2018bound}, asymptotic-preserving and positivity-preserving schemes for kinetic equations and multiscale problems \cite{hu2017asymptotic, hu2019second, hu2018asymptotic, jin2022asymptotic}, flux-corrected transport methods \cite{kuzmin2002flux,zalesak1979fully}, parametrized flux limiters \cite{xu2014parametrized}, and convex limiting techniques \cite{guermond2018second,guermond2019invariant}. Comprehensive reviews of high-order bound-preserving methods can be found in \cite{shu2020class,xu2017bound,ZS4}.
 
For QBMM-derived moment closure systems, realizability-preserving schemes have been explored in various contexts. Notably, the pioneering work of `kinetic flux' was proposed in finite volume schemes \cite{desjardins2008quadrature} and was later extended to third-order schemes \cite{fox2008quadrature}. However, only the first-order scheme for the two-node QMOM-derived system was formally proved realizable \cite{desjardins2008quadrature}. Vikas et al. \cite{vikas2011realizable,vikas2013realizable} developed quasi-high-order finite-volume methods by combining high-order weight reconstruction with first-order abscissa reconstruction, achieving realizability but with reduced accuracy compared to standard high-order schemes. As QBMM is also widely used to treat population balance equations for the evolution of particle size distributions, realizability-preserving second-order schemes have been developed for a series of gas-particle/droplet flow problems using canonical moments and the kinetic flux \cite{kah2012high,laurent2017realizable}. Another related advance is the high-order Runge-Kutta DG scheme with a realizability-limiter for solving the one-dimensional moment system derived with the maximum-entropy closure \cite{alldredge2015realizability}, but the method seems not directly extendable to QBMM-derived systems. 
Therefore, it becomes clear that no general high-order finite-volume method with kinetic fluxes exists that can provably preserve moment realizability for (strongly-nonlinear) QBMM closures. While existing strategies are often tailored to specific closures or simplified physics, their extension to complex closures like Gaussian-EQMOM and HyQMOM remains challenging due to strong nonlinear coupling and the lack of explicit primitive–flux relationships.  

This paper addresses the above issue by developing realizability-preserving finite-volume schemes for two moment models associated with the Gaussian-EQMOM and HyQMOM closures. Rather than relying on kinetic fluxes, we adopt a GQL-inspired approach that recasts the realizability condition as a nonnegative quadratic form in the moment vector, thereby reducing the original nonlinear constraints to bilinear inequalities that are more tractable for theoretical analysis and verification. Building on this formulation, we construct a Harten--Lax--van Leer (HLL) numerical flux with rigorously estimated wave speeds and intermediate states, embedding realizability directly into the flux evaluation. The approach applies to both Gaussian-EQMOM and HyQMOM; the only change is the set of quadratic-form coefficients tailored to their respective moment structures. In this way, we obtain a unified framework for realizability preservation across QBMM variants. A rigorous analysis provides sufficient realizability-preserving conditions under Courant--Friedrichs--Lewy (CFL) constraints, and a tailored limiter ensures the physical admissibility of reconstructed polynomials in practical computations. Numerical experiments confirm the method’s robustness and accuracy for challenging nonlinear moment-closure problems.
 
The rest of this paper is organized as follows. Section \ref{sec2} presents the moment closure approaches, key observations and lemmas for the realizability-preserving analysis. Section \ref{sec3} details the finite-volume schemes. Section \ref{sec4} reports numerical experiments demonstrating accuracy and robustness. Conclusions are given in Section \ref{sec5}.

\section{Preliminary}\label{sec2}

This section is divided into two parts: Section \ref{sec2.1} introduces quadrature-based moment models of kinetic equations and the moment-closure problem, Section \ref{sec2.2} discusses the realizability of quadrature-based moment models.

\subsection{Quadrature-based moment models of kinetic equations} \label{sec2.1}
Consider the hypothetical one-dimensional kinetic equation without external force terms, given by
\begin{equation}\label{eq_BGK1d}
    \partial _t f+v\partial _x f =\mathcal{Q}(f), 
\end{equation}
where the unknown $f=f(t,x,v)$ denotes the number density function (NDF), defined on the domain $(t,x,v)\in \mathbb{R}_{+}\times \mathbb R\times \mathbb R$.
The collision operator $\mathcal{Q}(f)$ is modeled using the Bhatnager--Gross--Krook (BGK) approximation \cite{BGK1954}, $\mathcal{Q}(f)=\frac{1}{\tau}(f^{eq}-f)$, where $\tau$ is the relaxation time and $f^{eq}$ is the local Maxwellian distribution defined as $f^{eq}(v;\rho,U,\theta)= \frac{\rho}{\sqrt{2\pi\theta}}\exp\left(-\frac{(v-U)^2}{2\theta}\right)$, 
Here $\rho(t,x)=\int_{\mathbb{R}}f{\rm d}v$, $U(t,x)=\int_{\mathbb{R}}vf{\rm d}v$, and $\theta(t,x)=\int_{\mathbb{R}}v^2f{\rm d}v$ representing the macroscopic density, velocity, and temperature of the gas, respectively. 
Define the $k$th velocity moment of NDF $f$ as $M_k=M_k(t,x) = \int_\mathbb R v^k f {\rm d}v$. Multiplying (\ref{eq_BGK1d}) by $v^{k}$ and integrating over $\mathbb{R}$ yield 
\begin{equation} \label{eq_intform}
	\partial_{t}M_k+\partial_{x}M_{k+1}=S_k, \quad k=0,1,2,\cdots,  
\end{equation} 
where $S_k:=\int_{\mathbb{R}}v^k \mathcal{Q}(f) {\rm d}v$ is the source term. Evidently, computing the $k$th-order moment requires knowledge of the $(k+1)$th-order moment, leading to an infinite hierarchy of coupled equations. Truncating this hierarchy at order $n\ge 2$ yields a finite-dimensional system governing the moment vector $\mathbf{M} =(M_0, M_1,\ldots,M_{n})^{\top}$:
\begin{equation}\label{eq_mom_trunc}
    \partial_t \mathbf{M} + \partial_x \mathbf{F}(\mathbf{M}) =\mathbf{S},
\end{equation}
where $\mathbf{F}(\mathbf{M}):= (M_1,\ldots,M_{n},\overline{M}_{n+1})^{\top}$ and $\mathbf{S}=\mathbf S(\mathbf M):=(S_0, S_1,\ldots,S_{n})^{\top}$. Here the notation $\overline M_{n+1}$ indicates that the $(n+1)$th moment is not included in the resolved moment vector $\mathbf M$, and \textit{the moment-closure problem} consists in prescribing a mapping $\overline M_{n+1} = \overline M_{n+1}(\mathbf M)$, 
so that the system (\ref{eq_mom_trunc}) becomes closed. 

\begin{remark}
    With the BGK collision $\mathcal{Q}(f)=\frac{1}{\tau}(f^{eq}-f)$, the source term in (\ref{eq_mom_trunc}) is
    \begin{equation} \label{eq_bgk_mom_source}
        S_k = \frac{1}{\tau}(\rho\Delta_k(U,\theta) - M_k)
    \end{equation}
    for $k=0,\ldots,n$, where $\Delta_k(U,\theta):=\int v^k f^{eq}(v;1,U,\theta)dv$ is the $k$th moment of the Maxwellian $f^{eq}$ with unit mass $\rho=1$. Since $\Delta_k(U,\theta)$ depends only on $M_0$, $M_1$, and $M_2$, it follows that for $n\ge 2$, the source term $\mathbf S$ in (\ref{eq_mom_trunc}) is a \textit{closed} function of $\mathbf M$. In contrast, for other collision models, the source term may also require a closure relation.
\end{remark}

Moment closure is usually achieved by reconstructing a (parametrized) NDF $f^*$ that is consistent with the resolved moment vector $\mathbf{M}$; unclosed quantities such as $\overline M_{n+1}$ are then computed using this distribution. Here we mainly condiser two types of quadrature-based moment models. The first is the Gaussian extended-QMOM (Gaussian-EQMOM) with the ansatz for the reconstructed NDF \cite{chalons2017multivariate}: $ f^{*}(t,x,v) = \sum^{N}_{i=1} \frac{\rho_i}{\sqrt{2\pi}\sigma}\exp\left(-\frac{(v-v_i)^2}{2\sigma^2}\right)$. Here $(2N+1)$ parameters are included: the weights $\rho_i = \rho_i(t,x)$, the abscissas $v_i = v_i(t,x)$, and a common standard deviation $\sigma=\sigma(t,x)$ for all Gaussian modes. These parameters $(\rho_i,v_i,\sigma)$ are determined by matching the $(2N+1)$ lower moments of the distribution:  
$M_k = \sum_{i=1}^N \rho_i \Delta_k(v_i,\sigma^2)$ for $k=0,\ldots,2N,$
where $\Delta_k(v_i,\sigma^2)$ defined in \eqref{eq_bgk_mom_source} denotes the $k$th moment of a Gaussian distribution. Since they are highly nonlinear systems, they can only be computed using specialized moment-inversion algorithms. Fortunately, efficient algorithms exist at least for small $N$ \cite{PIGOU2018243}. We mainly focus on the case $N=2$ for the system \eqref{eq_mom_trunc}.
 
When $N=2$ in \eqref{eq_mom_trunc}, the moment vector  ${\mathbf{M}}=(M_0,M_1,M_2,M_3,M_4)^\top$ (termed as the ``conservative variables''), the flux $\mathbf{F}({\mathbf{M}})= (M_1,M_2,M_3,M_4,\overline{M}_5)^{\top}$ and  the source term $\mathbf{S}= (S_0,S_1,S_2,S_3,S_4)^{\top}$ with $S_0=S_1=S_2=0$ and $S_k = \frac{1}{\tau}(\rho\Delta_k(U,\theta) - M_k)$ for $k=3,4$, where
$ \rho = M_0, U=\frac{M_1}{M_0}, \theta=\frac{M_2}{M_0}-U^2, \Delta_3(U,\theta) = U^3+3U\theta, \Delta_4(U,\theta) = U^4+6U^2\theta+3\theta^2$. 
When the system \eqref{eq_mom_trunc} is closed by Gaussian-EQMOM, the flux is 
\begin{equation}\label{eq_eqm2node_M5}
	\overline{M}_5=\rho_1 v^5_1 +\rho_2 v^5_2 +10M_3\sigma^2-15\sigma^2M_1,
\end{equation}
where the primitive variables, collected as $\mathbf{W}=(\rho_1,v_1,\rho_2,v_2,\sigma)$, are obtained by solving the moment-inversion system
\begin{equation}\label{eq_InvM_Gaussian}
	\begin{split}
		M_0=&\rho_1+\rho_2,\\
		M_1=&\rho_1 v_1 + \rho_2 v_2,\\
		M_2=&\rho_1 v^2_1 + \rho_2 v^2_2+\sigma^2M_0,\\
		M_3=&\rho_1 v^3_1 + \rho_2 v^3_2+3\sigma^2M_1,\\
		M_4=&\rho_1 v^4_1 + \rho_2 v^4_2+6\sigma^2M_2 - 3\sigma^4M_0.
	\end{split}
\end{equation}
For moment inversion of \eqref{eq_InvM_Gaussian}, one can solve $\sigma$ first from a third-order polynomial and then $(\rho_i,v_i)$ from the first four equations \cite{chalons2017multivariate}.

The next model we consider is the hyperbolic-QMOM (HyQMOM) with $N=2$, which is built based on a ``three-point'' ansatz $f^*(t,x,v)=\sum_{i=1}^N \rho_i\delta(v-v_i)$ \cite{fox2018conditional}. In this case, we still trace the moment vector $\bfM = (M_0,M_1,M_2,M_3,M_4)^\top$ by the system \eqref{eq_mom_trunc}, and the flux $\bfF(\bfM)$ now contains the following closure relation 
\begin{equation}\label{eq_hyq2node_M5}
	\overline{M}_5=\rho_1 v^5_1 +\rho_2 v^5_2+\rho_3 v^5_3.
\end{equation}
The primitive variables $\mathbf{W}=(\rho_1,\rho_2,\rho_3,v_1,v_3)$ are determined from (noticing that $v_2$ is given by the lower-order moments $M_0$ and $M_1$) 
\begin{equation}\label{eq_InvM_HyQMOM} 
	\begin{aligned}
		M_0&=\rho_1       + \rho_2       + \rho_3,\\
		M_1&=\rho_1 v_1   + \rho_2 v_2   + \rho_3 v_3,\\
		M_2&=\rho_1 v^2_1 + \rho_2 v_2^2 + \rho_3 v^2_3,\\
		M_3&=\rho_1 v^3_1 + \rho_2 v_2^3 + \rho_3 v^3_3,\\
		M_4&=\rho_1 v^4_1 + \rho_2 v_2^4 + \rho_3 v^4_3,\\
		v_2&= U = {M_1}/{M_0}.
	\end{aligned}
\end{equation}
The system (\ref{eq_InvM_HyQMOM}) admits an analytical solution for $\mathbf{W} = \mathbf{W}(\mathbf M)$ \cite{johnson2023positivity}, facilitating highly efficient moment inversion. More discussions on HyQMOM can be found in \cite{fox2022hyqmom, fox2018conditional, johnson2023positivity, zhang2024hyqmom}.

\subsection{Realizability of quadrature-based moment models}\label{sec2.2}

This subsection considers the central question: realizability. A moment vector $\mathbf M$ is called \textit{realizable} if there exists a non-negative distribution $f^{*}(v)\ge0$ such that $M_k=\int_{\mathbb{R}}v^kf^{*}(v){\rm d}v$ for $k=0,\ldots,n$. This realizability constitutes a necessary condition for $\mathbf M$ to originate from a physically meaningful NDF. The collection of all realizable $\mathbf M$ forms a subset of $\mathbb R^{n+1}$, known as the \textit{moment space} (or the {\em admissible moment set}). It is a proper subset of $\mathbb R^{n+1}$ because, for instance, one always has $M_{2k}\ge 0$ for $k=0,1,\ldots$. Moreover, $\mathbf M$ is called \textit{strictly realizable} if it lies in the \textit{interior} of the moment space.

Indeed, the moment space, particularly its interior, can be fully characterized using the Hankel matrix associated with $\mathbf M$. To state the results, we now assume $n=2N$ for $N\in\mathbb N$, i.e., $\mathbf{M} =(M_0, M_1,\ldots,M_{2N})^{\top}$. This suffices for our applications, and the case $n=2N-1$ can also be treated (see \cite{shohat1943problem}). The Hankel matrix is defined as
\begin{equation}
    \mathcal H_{N} = \mathcal H_{N}(\mathbf M) =
    \begin{bmatrix}
        M_0 & M_1 & \cdots & M_N \\
        M_1 & M_2 & \cdots & M_{N+1} \\
        \vdots & \vdots &  & \vdots \\
        M_N & M_{N+1} & \cdots & M_{2N}
    \end{bmatrix}
    \in \mathbb R^{(N+1)\times (N+1)}.
\end{equation}
Denote by $\Omega_N$ the interior of the moment space. We have the following
\begin{theorem}[Theorem 1.2 of \cite{shohat1943problem}] \label{Th1_Hankel}
    A moment vector $\mathbf M \in \Omega_N$ if and only if the Hankel matrix $\mathcal H_{N}(\mathbf M)$ is positive definite (i.e., strictly realizable). Instead, $\mathbf M$ lies on the boundary of the moment space if and only if $\mathcal H_N(\mathbf M)$ is positive semi-definite with $\det \mathcal H_N(\mathbf M) = 0$ (i.e., realizable). In the latter case, $\mathbf M$ is generated by a weighted sum of $r$ Dirac delta functions, where $r\le N$ is the rank of $\mathcal H_N(\mathbf M)$. 
\end{theorem}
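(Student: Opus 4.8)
The plan is to hang everything on a single algebraic identity linking the Hankel matrix to a representing density. For any real polynomial $p(v)=\sum_{k=0}^{N}c_k v^k$ with coefficient vector $\mathbf c=(c_0,\dots,c_N)^\top$, and any nonnegative $f^*$ realizing $\mathbf M$, one has
\begin{equation}\label{eq_plan_qform}
  \mathbf c^\top \mathcal H_N(\mathbf M)\,\mathbf c=\sum_{j,k=0}^{N}c_j c_k M_{j+k}=\int_{\mathbb R} p(v)^2 f^*(v)\,\mathrm dv\ge 0 .
\end{equation}
This one line already delivers necessity of semi-definiteness: if $\mathbf M$ is realizable then $\mathcal H_N(\mathbf M)\succeq 0$. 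The whole statement then reduces to the converse (producing a representing measure from semi-definiteness) together with locating realizable vectors as interior versus boundary points.

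For the converse I would argue by constructing a quadrature (atomic) representation. When $\mathcal H_N(\mathbf M)\succ 0$, the bilinear form $\langle v^i,v^j\rangle:=M_{i+j}$ on polynomials of degree $\le N$ is a genuine inner product, so Gram--Schmidt on $1,v,\dots,v^N$ yields orthonormal polynomials obeying a three-term recurrence; the associated $(N+1)\times(N+1)$ Jacobi (tridiagonal) matrix is symmetric, hence has $N+1$ real eigenvalues $v_1,\dots,v_{N+1}$ and positive (Christoffel) weights $\rho_1,\dots,\rho_{N+1}$. The Gaussian rule $\sum_i \rho_i\,\delta(v-v_i)$ is exact up to degree $2N+1$ and therefore reproduces $M_0,\dots,M_{2N}$, exhibiting a nonnegative representing measure; thus $\mathbf M$ is realizable. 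Combined with \eqref{eq_plan_qform}, this gives realizable $\iff \mathcal H_N(\mathbf M)\succeq 0$.

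To separate interior from boundary, I would use that $\mathbf M\mapsto \mathcal H_N(\mathbf M)$ is affine and that positive definiteness is an open condition. If $\mathcal H_N(\mathbf M)\succ 0$, an entire neighborhood of $\mathbf M$ keeps the Hankel matrix positive definite, hence realizable by the previous step, so $\mathbf M\in\Omega_N$. Conversely, if $\mathcal H_N(\mathbf M)$ is singular with a null vector $\mathbf c\neq 0$ and associated polynomial $p$, then \eqref{eq_plan_qform} forces $\int p^2 f^*\,\mathrm dv=0$ for \emph{every} representing measure, so each is supported in the fixed finite zero set $\{p=0\}$ of at most $N$ points. The moment vectors of measures on such a fixed $k$-point set ($k\le N$) span a proper linear subspace of $\mathbb R^{2N+1}$, so $\mathbf M$ cannot be interior and must lie on the boundary. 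Since positive definite and singular positive semi-definite exhaust the realizable vectors, these two implications are in fact the claimed equivalences.

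Finally, for the rank assertion on the boundary, I would set $r=\operatorname{rank}\mathcal H_N(\mathbf M)$, observe that the leading $r\times r$ principal block is positive definite while $\mathcal H_r$ is singular, and take the kernel polynomial of minimal degree; a dimension count shows it has degree $r$ with $r$ distinct real roots that carry the (unique) representing measure, giving the weighted sum of $r\le N$ Dirac masses. The hard part will be this existence/converse step, namely converting mere semi-definiteness into an honest nonnegative measure: this is precisely the content of the classical truncated Hamburger moment problem, and the delicate case is the singular one, where the inner product degenerates and the Gram--Schmidt/Jacobi construction must be truncated at step $r$, together with establishing uniqueness of the resulting $r$-atom representation.
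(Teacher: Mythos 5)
The paper does not prove this statement at all: it is quoted as Theorem~1.2 of Shohat--Tamarkin and used as a black box, so there is no in-paper argument to compare against. Your overall strategy (the quadratic-form identity $\mathbf c^\top\mathcal H_N(\mathbf M)\mathbf c=\int p^2f^*\,\mathrm dv$, Gauss quadrature in the definite case, kernel polynomials in the singular case) is the standard route, but two of your steps do not hold up as written. The most serious is the implication ``$\mathcal H_N(\mathbf M)$ singular $\Rightarrow$ $\mathbf M$ not interior.'' From the fact that every representing measure of $\mathbf M$ is supported on the zero set of the kernel polynomial $p$, you conclude that $\mathbf M$ lies in a proper linear subspace and ``so cannot be interior.'' That is a non sequitur: a point can lie in a proper subspace and still be interior to a full-dimensional set (the origin lies on the line $x=0$ yet is interior to the unit disk). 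Realizable vectors near $\mathbf M$ need not be represented by measures supported on $\{p=0\}$, so your subspace contains $\mathbf M$ but says nothing about the moment space around it. The fix uses the linearity of $\mathbf M\mapsto\mathcal H_N(\mathbf M)$, which you already invoke for the opposite direction: if a whole ball $\{\mathbf M+\mathbf h\}$ were realizable, then $\mathbf c^\top\mathcal H_N(\mathbf M\pm\mathbf h)\mathbf c\ge0$ for a null vector $\mathbf c$ forces $\mathbf c^\top\mathcal H_N(\mathbf h)\mathbf c=0$ for all $\mathbf h$; taking $\mathbf h$ to be coordinate vectors kills every coefficient of $p^2$, so $\mathbf c=0$, a contradiction.

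The existence/representation half is also incomplete precisely where it is delicate. In the definite case, Gram--Schmidt on $1,\dots,v^N$ determines only $a_0,\dots,a_{N-1}$ and $b_1,\dots,b_N$; the last diagonal entry $a_N=\langle vp_N,p_N\rangle$ of your $(N{+}1)\times(N{+}1)$ Jacobi matrix involves $M_{2N+1}$, which is not part of the data. You must note that an arbitrary choice of $M_{2N+1}$ yields an $(N{+}1)$-point rule exact to degree $2N+1$ and hence reproducing $M_0,\dots,M_{2N}$; the $N$-point rule built from $p_N$ alone is exact only to degree $2N-1$ and strictly underestimates $M_{2N}$. In the singular case, your claim that the leading $r\times r$ block is positive definite is false for general singular positive semidefinite Hankel matrices: $(M_0,M_1,M_2)=(0,0,1)$ has rank one and a vanishing leading block, and this same example shows that semidefiniteness alone does not yield a representing measure (total mass zero but nonzero second moment), so the degenerate direction genuinely requires the extra recursiveness/rank condition of the truncated Hamburger problem. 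You correctly flag this as ``the hard part'' but then defer it, so the boundary assertion and the $r$-atom representation are stated rather than proved.
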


Therefore, $\Omega_N$ is the set of moment vectors for which the associated Hankel matrix is positive definite. It follows that $\Omega_N$ is an open convex set.
For the case of $N=2$, namely, $\mathbf M=(M_0,M_1,M_2,M_3,M_4)^\top\in\mathbb R^5$. Then, by the above theorem, $\Omega_2$ can be explicitly characterized as
\begin{equation}\label{eq2:moment_realizability}
    \Omega_2 =\left\{ \mathbf{M}\in\mathbb R^5: M_0>0, \ \mathcal{E}(\mathbf{M})=e>0, \  \mathcal{Z}(\mathbf{M})=\eta-(e^2+\frac{q^2}{e})>0\right\}, 
\end{equation}
where $e:={(M_0M_2-M^2_1)}/{M^2_0}$, $q:={((M_3M^2_0-M^3_1)-3M_1(M_0M_2-M^2_1))}/{M^3_0}$, and  $\eta:={(-3M^4_1+M_4M^3_0-4M^2_0M_1M_3+6M_0M_1^2M_2)}/{M^4_0}$. 
The inequalities in \eqref{eq2:moment_realizability} mean that all leading principal minors of $\mathcal H_2(\mathbf M)$ are positive. 
For Gaussian-EQMOM, it is more natural to characterize realizability in terms of $\mathbf W$, leading to
$\Omega_{\mathbf W}:=\{\mathbf W\in\mathbb R^5:\ \rho_1>0, \ \rho_2>0, \ \sigma>0\}$.
The existence of $\mathbf W\in\Omega_{\mathbf W}$ is ensured for almost every $\mathbf M\in \Omega_2$ (the interior of the moment space in $\mathbb R^5$, \eqref{eq2:moment_realizability}); indeed, a zero-measured lower-dimensional subset $\{\mathbf M: \ q=0,\ \eta>3e^2\}\subset\mathbb R^5$ has to be excluded \cite{chalons2017multivariate}.  
For HyQMOM, the realizability naturally introduces $ \Omega_{\mathbf W}:=\{\mathbf W\in\mathbb R^5:\ \rho_1>0, \ \rho_2>0, \ \rho_3>0\}$. The existence of $\mathbf W \in \Omega_{\mathbf W}$ is ensured for every realizable $\mathbf M$.  

Since both Gaussian-EQMOM and HyQMOM yield hyperbolic systems \cite{chalons2017multivariate,fox2018conditional,huang2020stability}, it is crucial to ensure the realizability of the transported moment vector $\mathbf M$ during numerical simulations. Violating the condition may result in loss of hyperbolicity, nonphysical oscillations, or even blow-up of the scheme. These issues will be addressed in the next section.

\section{Realizability-preserving finite volume schemes}\label{sec3}

This section develops provably realizability-preserving finite volume schemes for the five-moment systems \eqref{eq_mom_trunc} obtained with the Gaussian-EQMOM \eqref{eq_eqm2node_M5} and HyQMOM \eqref{eq_hyq2node_M5}  in the case $N=2$. With key preparation steps presented in Section \ref{sec3.1}, the schemes are developed in Sections \ref{sec3.2} and \ref{sec3.3}.

\subsection{Auxiliary results}\label{sec3.1}

This subsection derives several fundamental Lemmas for the construction of provably realizability-preserving numerical schemes.

\begin{lemma}\label{prop1}
Let $\mathbf{M} \in \Omega_2$ and let $\mathbf F = \mathbf F(\mathbf M)$ be defined by \eqref{eq_eqm2node_M5} and \eqref{eq_InvM_Gaussian} for the two-node Gaussian-EQMOM. For any constant $a>\sqrt{3}$, define
\begin{subequations} \label{eq:delta_mp}
\begin{align} 
    \delta^+ = \delta^+(\bfM) = \max \{ v_1+a\sigma, \ v_2+a\sigma, \ 0 \}, \\
    \delta^- = \delta^-(\bfM) = \min \{ v_1-a\sigma, \ v_2-a\sigma, \ 0 \}.
\end{align}
\end{subequations}
Then $\delta^{+}\mathbf{M} -\mathbf{F}$ and $\mathbf{F}-\delta^{-}\mathbf{M}$ both belong to $\Omega_2$.
\end{lemma}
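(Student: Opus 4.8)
The plan is to exploit the Gaussian-EQMOM representation of $\mathbf M$ and to recognize that the closure $\overline M_5$ in \eqref{eq_eqm2node_M5} is exactly the fifth velocity moment of the reconstructed NDF $f^*(v)=\sum_{i=1}^2 \frac{\rho_i}{\sqrt{2\pi}\sigma}\exp(-(v-v_i)^2/2\sigma^2)$, which is admissible since $\mathbf W\in\Omega_{\mathbf W}$ gives $\rho_i>0,\sigma>0$. Indeed, inserting the Gaussian raw moments into $\sum_i\rho_i v_i^5$ and using \eqref{eq_InvM_Gaussian} reproduces $\overline M_5$. Consequently every component of $\mathbf M':=\delta^+\mathbf M-\mathbf F$ is a single integral against $f^*$, namely $M'_k=\int_{\mathbb R}(\delta^+-v)v^k f^*(v)\,\mathrm{d}v$ for $k=0,\dots,4$, and the entries of its Hankel matrix take the clean form $(\mathcal H_2(\mathbf M'))_{ij}=\int_{\mathbb R}(\delta^+-v)\,v^{\,i+j}f^*(v)\,\mathrm{d}v$, $i,j\in\{0,1,2\}$. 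By Theorem~\ref{Th1_Hankel} it then suffices to prove that this matrix is positive definite.

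The first point I would stress is that the naive route fails and constitutes the main obstacle: since $f^*>0$ everywhere while $\delta^+-v\to-\infty$ as $v\to+\infty$, the weighted density $(\delta^+-v)f^*$ is \emph{not} pointwise nonnegative, so one cannot deduce realizability of $\mathbf M'$ by exhibiting it as an admissible NDF. This is circumvented by working directly with the Hankel quadratic form. For any $\mathbf c=(c_0,c_1,c_2)^\top\ne 0$ and $p(v)=c_0+c_1 v+c_2 v^2$, I would write $\mathbf c^\top\mathcal H_2(\mathbf M')\mathbf c=\sum_{i=1}^2\rho_i\int_{\mathbb R}(\delta^+-v)\,p(v)^2 g_i(v)\,\mathrm{d}v$ with $g_i=\mathcal N(v_i,\sigma^2)$. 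The substitution $v=v_i+\sigma z$ converts each mode into $\sigma\int_{\mathbb R}(a_i-z)\,q_i(z)^2\phi(z)\,\mathrm{d}z$, where $\phi$ is the standard normal density, $q_i(z)=p(v_i+\sigma z)$ is again a quadratic, and $a_i=(\delta^+-v_i)/\sigma\ge a$ by the definition of $\delta^+$.

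The heart of the argument is then a one-variable Gaussian identity. Using $\int z^2\phi=1$, $\int z^4\phi=3$ and the vanishing of odd moments, $\int_{\mathbb R}(a-z)q(z)^2\phi(z)\,\mathrm{d}z$ equals the quadratic form $\tilde{\mathbf c}^\top Q(a)\tilde{\mathbf c}$ in the coefficient vector $\tilde{\mathbf c}$ of $q$, where $Q(a)=\left(\begin{smallmatrix}a&-1&a\\ -1&a&-3\\ a&-3&3a\end{smallmatrix}\right)$. Its leading principal minors are $a$, $a^2-1$, and $2a(a^2-3)$, so by Sylvester's criterion $Q(a)$ is positive definite precisely when $a>\sqrt3$ — which is exactly the hypothesis and explains the threshold. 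Since $a_i\ge a>\sqrt3$, each $Q(a_i)$ is positive definite; as $\sigma>0$ forces $q_i\not\equiv0$ whenever $p\not\equiv0$, every modal integral is strictly positive, and $\rho_i>0$ then gives $\mathbf c^\top\mathcal H_2(\mathbf M')\mathbf c>0$. Hence $\mathcal H_2(\mathbf M')$ is positive definite and $\mathbf M'\in\Omega_2$.

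For $\mathbf F-\delta^-\mathbf M$ the same scheme applies after the reflection $z\mapsto-z$: the Hankel entries become $\int_{\mathbb R}(v-\delta^-)v^{\,i+j}f^*\,\mathrm{d}v$, and the substitution yields $\sigma\int_{\mathbb R}(b_i+z)q_i^2\phi\,\mathrm{d}z$ with $b_i=(v_i-\delta^-)/\sigma\ge a$; the evenness of $\phi$ maps this to the same form $Q(b_i)$, so positivity follows identically. Two technical caveats I would address are (i) the Gaussian inversion $\mathbf W\in\Omega_{\mathbf W}$ exists only for almost every $\mathbf M\in\Omega_2$, so the degenerate subset should be handled by a density/continuity argument; and (ii) the consistency of $\overline M_5$ in \eqref{eq_eqm2node_M5} with the true fifth velocity moment of $f^*$, which is exactly what makes the single-integral Hankel representation hold and is therefore the linchpin of the whole proof.
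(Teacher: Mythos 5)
Your proof is correct and follows essentially the same route as the paper's: both decompose the Hankel quadratic form over the two Gaussian modes with positive weights $\rho_i$, reduce each mode (via an affine normalization — your change of variables $v=v_i+\sigma z$, the paper's Gaussian-moment recurrence) to the same $3\times 3$ matrix with leading principal minors $a$, $a^2-1$, $2a(a^2-3)$, and invoke Sylvester's criterion to obtain the threshold $a>\sqrt 3$. Your two caveats are well placed but shared by the paper's argument (note your consistency check of $\overline M_5$ with $\sum_i\rho_i\Delta_5(v_i,\sigma^2)$ in fact reveals that the $15\sigma^2M_1$ term in \eqref{eq_eqm2node_M5} should read $15\sigma^4M_1$).
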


\begin{proof}
    By Theorem \ref{Th1_Hankel}, it suffices to verify that the Hankel matrices $\mathcal H_2(\delta^{+}\mathbf{M} -\mathbf{F})$ and $\mathcal H_2(\mathbf{F}-\delta^{-}\mathbf{M})$ are positive definite. Recall that
    \[
	\mathcal H_2(\mathbf M) =
        \begin{bmatrix}
            M_0 & M_1 & M_2\\ M_1 & M_2 & M_3	\\ M_2 & M_3 & M_4
        \end{bmatrix},
        \quad
        \mathcal H_2(\mathbf F) =
        \begin{bmatrix}
            M_1 & M_2 & M_3	\\ M_2 & M_3 & M_4 \\M_3 & M_4 & M_5
        \end{bmatrix}.
    \]
    From \eqref{eq_InvM_Gaussian}, these matrices can be decomposed as
    \[
        \mathcal H_2(\mathbf M) = \rho_1 A_1 + \rho_2 A_2, \quad
        \mathcal H_2(\mathbf F) = \rho_1 B_1 + \rho_2 B_2,
    \]
    where
    \[
        A_i = 
        \begin{bmatrix}
            \Delta_0(v_i,\sigma^2) & \Delta_1(v_i,\sigma^2) & \Delta_2(v_i,\sigma^2) \\
            \Delta_1(v_i,\sigma^2) & \Delta_2(v_i,\sigma^2) & \Delta_3(v_i,\sigma^2) \\
            \Delta_2(v_i,\sigma^2) & \Delta_3(v_i,\sigma^2) & \Delta_4(v_i,\sigma^2)
        \end{bmatrix}, \
        B_i = 
        \begin{bmatrix}
            \Delta_1(v_i,\sigma^2) & \Delta_2(v_i,\sigma^2) & \Delta_3(v_i,\sigma^2) \\
            \Delta_2(v_i,\sigma^2) & \Delta_3(v_i,\sigma^2) & \Delta_4(v_i,\sigma^2) \\
            \Delta_3(v_i,\sigma^2) & \Delta_4(v_i,\sigma^2) & \Delta_5(v_i,\sigma^2) \\
        \end{bmatrix}
    \]
for $i=1,2$, with $\Delta_k(v_i,\sigma^2)$ denoting the $k$-th moment of a Gaussian distribution as defined in \eqref{eq_bgk_mom_source}. Hence
    \[
        \mathcal H_2(\delta^{+}\mathbf{M} -\mathbf{F}) = \sum_{i=1}^2 \rho_i (\delta^+ A_i - B_i), \quad
        \mathcal H_2(\mathbf{F}-\delta^{-}\mathbf{M}) = \sum_{i=1}^2 \rho_i (B_i - \delta^- A_i).
    \]
    Thus, it suffices to show that $\delta^+ A_i - B_i$ and $B_i - \delta^- A_i$ are positive definite for $i=1,2$. A direct calculation gives
    \[
        (v_i+a\sigma)A_i - B_i = \sigma\mathcal H_2(\bm{\beta}), \quad
        B_i - (v_i-a\sigma)A_i = \sigma\mathcal H_2(\bm{\gamma}),
    \]
    where $\bm{\beta} = (\beta_0,\beta_1,\beta_2,\beta_3,\beta_4)^\top$ and $\bm{\gamma} = (\gamma_0,\gamma_1,\gamma_2,\gamma_3,\gamma_4)^\top$ with  
    \[
        \beta_k = a\Delta_k(v_i,\sigma^2) - k\sigma\Delta_{k-1}(v_i,\sigma^2), \quad
        \gamma_k = a\Delta_k(v_i,\sigma^2) + k\sigma\Delta_{k-1}(v_i,\sigma^2)
    \] 
    for $k=0,\ldots,4$.
    Here, we use the recurrence relation \cite{huang2020stability} for Gaussian moments $\Delta_k(v,\sigma^2)$: 
    $$\Delta_{k+1}(v,\sigma^2) = v\Delta_k(v,\sigma^2)+k\sigma^2\Delta_{k-1}(v,\sigma^2) \mbox{ with } \Delta_{-1}(v_i,\sigma^2)=0 \mbox{ and } \Delta_0(v,\sigma^2)=1.$$ 

    Let $P_k$ denote the $k$-th leading principal minor of both $\mathcal H_2(\bm{\beta})$ and $\mathcal H_2(\bm{\gamma})$ (these minors coincide for both matrices). The positive definiteness of these matrices requires
    \begin{equation} \label{eq_leading_principal_minor}
        P_1 = a>0, \quad P_2=\sigma^2(a^2-1)>0, \quad 
        P_3=2a\sigma^6(a^2-3)>0,
    \end{equation}
    which holds if and only if $a>\sqrt{3}$.

    Since $A_i$ is positive definite as the Hankel matrix associated with Gaussian moments $(\Delta_0(v_i,\sigma^2),\ldots,\Delta_4(v_i,\sigma^2))^\top$, for any nonzero vector $x\in\mathbb R^3$, it follows that
    \[ 
    \begin{aligned}
        x^\top (\delta^+ A_i - B_i) x \ge x^\top ((v_i+a\sigma)A_i-B_i)x>0, \\
        x^\top (B_i - \delta^- A_i) x \ge x^\top (B_i - (v_i-a\sigma)A_i)x>0.
    \end{aligned}
    \]
    This completes the proof.
\end{proof}

\begin{remark}
	If $a=\sqrt{3}$, then $\delta^{+}\mathbf{M} -\mathbf{F}$ and $\mathbf{F}-\delta^{-}\mathbf{M}$ remain realizable and, in most cases, lie in the interior of $\Omega_2$. Specifically, if $v_1\ne v_2$, then $\delta^+ > v_i + \sqrt{3}\sigma$ for some $i$, ensuring the positive definiteness of $\mathcal H_2(\delta^{+}\mathbf{M} -\mathbf{F})$, and similarly for $\mathcal H_2(\mathbf{F}-\delta^{-}\mathbf{M})$. In the degenerate case $v_1=v_2=:v$, where $\mathbf M$ corresponds to a single Gaussian mode lying on a measure-zero lower-dimensional manifold of $\Omega_2\subset\mathbb R^5$, we have $A_i=:A$, $B_i=:B$, and $\delta^+=v+\sqrt{3}\sigma$. Examining the principal minors shows that $P_3$ in \eqref{eq_leading_principal_minor} vanishes, so $\delta^{+}\mathbf{M} -\mathbf{F}$ lies on the boundary of the admissible moment set $\Omega_2$ and can be represented as a weighted sum of two Dirac delta functions by Theorem~\ref{Th1_Hankel}. An analogous statement holds for $\mathbf{F}-\delta^{-}\mathbf{M}$.
\end{remark}

Similarly, we have the following results for HyQMOM.

\begin{lemma}\label{prop2}
    Let $\mathbf{M} \in \mathbb R^5$ be realizable, and let $\mathbf F = \mathbf F(\mathbf M)$ be defined by \eqref{eq_hyq2node_M5} and \eqref{eq_InvM_HyQMOM}. Define
    \begin{subequations} \label{eq:delta_mp_hyq}
    \begin{align} 
        &\delta^+ = \delta^+(\bfM) = \max \{ v_1, \ v_2, \ v_3, \ 0 \}, \\
        &\delta^- = \delta^-(\bfM) = \min \{ v_1, \ v_2, \ v_3, \ 0 \}.
    \end{align}
    \end{subequations}
    Then both $\delta^{+}\mathbf{M} -\mathbf{F}$ and $\mathbf{F}-\delta^{-}\mathbf{M}$ are realizable and   
     both lie in the closure of the admissible moment set $\Omega_2$. 
\end{lemma}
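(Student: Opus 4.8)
The plan is to reuse the Hankel-matrix strategy of the proof of Lemma~\ref{prop1}, but to exploit the fact that the three-point HyQMOM ansatz is a pure sum of Dirac masses, which collapses the relevant matrix blocks to rank one and renders the required comparison essentially immediate. First I would invoke Theorem~\ref{Th1_Hankel}: since a moment vector lies in the closure $\overline{\Omega_2}$ and is realizable precisely when its associated Hankel matrix is positive \emph{semi}-definite, it suffices to prove that $\mathcal H_2(\delta^{+}\mathbf M-\mathbf F)$ and $\mathcal H_2(\mathbf F-\delta^{-}\mathbf M)$ are positive semi-definite.

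Next I would use the moment-inversion system \eqref{eq_InvM_HyQMOM} together with the closure relation \eqref{eq_hyq2node_M5} to write $M_k=\sum_{i=1}^{3}\rho_i v_i^{\,k}$ for $k=0,\ldots,5$, where the weights satisfy $\rho_i\ge 0$ because $\mathbf M$ is realizable (the intrinsic realizability property of HyQMOM recorded in Section~\ref{sec2.2}). This yields the decompositions $\mathcal H_2(\mathbf M)=\sum_{i=1}^{3}\rho_i A_i$ and $\mathcal H_2(\mathbf F)=\sum_{i=1}^{3}\rho_i B_i$. The crucial observation is that, for a single Dirac mass at $v_i$, the block is the rank-one dyad $A_i=\mathbf p_i\mathbf p_i^{\top}$ with $\mathbf p_i=(1,v_i,v_i^{2})^{\top}$, which is positive semi-definite, and that shifting every moment index up by one simply scales this dyad, i.e. $B_i=v_i A_i$. (This is the analogue of the identity $(v_i\pm a\sigma)A_i\mp B_i=\sigma\mathcal H_2(\bm\beta),\mathcal H_2(\bm\gamma)$ from Lemma~\ref{prop1}, but now \emph{exact} and free of the margin parameter $a$.)

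Combining these, the two target matrices become
\[
    \mathcal H_2(\delta^{+}\mathbf M-\mathbf F)=\sum_{i=1}^{3}\rho_i(\delta^{+}-v_i)\,\mathbf p_i\mathbf p_i^{\top},
    \qquad
    \mathcal H_2(\mathbf F-\delta^{-}\mathbf M)=\sum_{i=1}^{3}\rho_i(v_i-\delta^{-})\,\mathbf p_i\mathbf p_i^{\top}.
\]
Since $\delta^{+}=\max\{v_1,v_2,v_3,0\}\ge v_i$ and $\delta^{-}=\min\{v_1,v_2,v_3,0\}\le v_i$ for every $i$, each scalar coefficient $\rho_i(\delta^{+}-v_i)$ and $\rho_i(v_i-\delta^{-})$ is nonnegative. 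Hence both matrices are nonnegative combinations of positive semi-definite dyads and are therefore positive semi-definite, and Theorem~\ref{Th1_Hankel} delivers the claimed realizability and membership in $\overline{\Omega_2}$.

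The point requiring the most care is precisely \emph{why} the conclusion is closure membership rather than interior membership. Unlike the Gaussian-EQMOM case, there is no room to enforce a strict margin: each $A_i$ is only rank one, so the best one can hope for is positive semi-definiteness, and strict positivity would fail whenever a coefficient vanishes (for instance when $\delta^{+}=v_i$). This is exactly the behavior the statement reflects. The only genuinely delicate situation is when $\mathbf M$ already sits on $\partial\Omega_2$, where some weights may vanish or nodes may coincide; but even there the representation $M_k=\sum_i\rho_i v_i^{\,k}$ with $\rho_i\ge 0$ persists, so the semi-definiteness argument—and hence the closure membership—is unaffected.
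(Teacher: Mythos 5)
Your proposal is correct and follows essentially the same route as the paper: decompose $\mathcal H_2(\mathbf M)=\sum_i\rho_i A_i$ and $\mathcal H_2(\mathbf F)=\sum_i\rho_i B_i$, observe the exact identity $B_i=v_iA_i$ with each $A_i$ positive semidefinite (the paper states this directly; you additionally make the rank-one dyad structure $A_i=\mathbf p_i\mathbf p_i^{\top}$ explicit), and conclude positive semidefiniteness from the sign of $\delta^{\pm}-v_i$. Your closing discussion of why only closure membership (not interior) can be obtained matches the spirit of the paper's argument, so no changes are needed.
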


\begin{proof}
    By arguments analogous to those in the proof of Lemma~\ref{prop1},
    \[
        \mathcal H_2(\delta^{+}\mathbf{M} -\mathbf{F}) = \sum_{i=1}^3 \rho_i (\delta^+ A_i - B_i), \quad
        \mathcal H_2(\mathbf{F}-\delta^{-}\mathbf{M}) = \sum_{i=1}^3 \rho_i (B_i - \delta^- A_i),
    \]
    where
    \[
        A_i = 
        \begin{bmatrix}
	   1    & v_i   &  v_i^2\\
	   v_i  & v_i^2 &  v_i^3\\
	   v_i^2& v_i^3 &  v_i^4
	\end{bmatrix}, \quad
	B_i = 
        \begin{bmatrix}
	   v_i   & v_i^2 &  v_i^3	\\
	   v_i^2 & v_i^3 &  v_i^4 \\
	   v_i^3 & v_i^4 &  v_i^5
	\end{bmatrix}.
    \]
    Note that $v_iA_i-B_i = 0$ and each $A_i$ is positive semidefinite. Hence $x^\top (\delta^+ A_i - B_i) x \ge 0$ and $x^\top (B_i - \delta^- A_i) x \ge 0$ for all $x\in\mathbb R^3$, which proves the claim.
\end{proof}

The next result is useful for analyzing the effect of the source term in \eqref{eq_mom_trunc} on realizability preservation.
\begin{lemma}\label{pro3}
Let $\mathbf M\in \Omega_2$ and let the source be $\mathbf S(\bfM)=(0,0,0,S_3,S_4)^\top$. Then $\mathbf M+\mu \mathbf S$ is realizable for $\mu \in [0,\mu_0]$ and lies in $\Omega_2$ for $\mu\in[0,\mu_0)$. Here, $\mu_0$ (dependent on $\mathbf M$ and $\tau$) is the unique positive root of the quadratic $\mathcal{P}(\mu)=a_0\mu^2+a_1\mu+a_2$, with coefficients
\begin{equation}\label{eq:abc}
\begin{aligned}
    a_0&=-M_0S^2_3<0,\\
    a_1&=2S_3(M_1M_2-M_0M_3)+S_4(M_0M_2-M^2_1),\\
    a_2&=\det(\mathcal H_2(\mathbf M))=M_0M_2M_4-M_0M^2_3-M^2_1M_4+2M_1M_2M_3-M^3_2>0.
\end{aligned}
\end{equation} 
\end{lemma}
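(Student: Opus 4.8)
The plan is to exploit the special structure of $\mathbf S=(0,0,0,S_3,S_4)^\top$: along the ray $\mu\mapsto\mathbf M+\mu\mathbf S$ only the two highest moments move, $M_3\mapsto M_3+\mu S_3$ and $M_4\mapsto M_4+\mu S_4$, while $M_0,M_1,M_2$ stay fixed. First I would invoke Theorem~\ref{Th1_Hankel} to reduce (strict) realizability to the (semi)definiteness of the Hankel matrix $\mathcal H_2(\mathbf M+\mu\mathbf S)$. Its first two leading principal minors, $P_1=M_0$ and $P_2=M_0M_2-M_1^2$, are independent of $\mu$ and hence equal their values at the given $\mathbf M\in\Omega_2$, so both remain strictly positive for every $\mu$. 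By Sylvester's criterion, the entire question of positive definiteness therefore collapses to the sign of the third leading minor $P_3(\mu)=\det\mathcal H_2(\mathbf M+\mu\mathbf S)$.

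Next I would compute $P_3(\mu)$ explicitly. Expanding the $3\times 3$ determinant and collecting powers of $\mu$ — a routine, bookkeeping-heavy step whose only subtlety is tracking the cross terms in $(M_3+\mu S_3)^2$ — produces a quadratic $P_3(\mu)=a_0\mu^2+a_1\mu+a_2$. Matching coefficients, the quadratic term is $-M_0S_3^2$, the linear term is $2S_3(M_1M_2-M_0M_3)+S_4(M_0M_2-M_1^2)$, and the constant term is $\det\mathcal H_2(\mathbf M)$; these are precisely $a_0,a_1,a_2$ in \eqref{eq:abc}, so $\mathcal P=P_3$. With this identification, the root structure is immediate: $\mathbf M\in\Omega_2$ gives $a_2=\det\mathcal H_2(\mathbf M)>0$, and $a_0=-M_0S_3^2<0$ (using $M_0>0$ and $S_3\ne0$). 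Thus the product of the roots is $a_2/a_0<0$, so $\mathcal P$ has exactly one positive root $\mu_0$ and one negative root, and as a downward-opening parabola with $\mathcal P(0)=a_2>0$ it is strictly positive on $[0,\mu_0)$ and vanishes at $\mu_0$. Combined with $P_1,P_2>0$, this shows $\mathcal H_2(\mathbf M+\mu\mathbf S)$ is positive definite for $\mu\in[0,\mu_0)$, i.e. $\mathbf M+\mu\mathbf S\in\Omega_2$ there.

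The one point requiring genuine (rather than merely computational) care — and the main obstacle — is the boundary case $\mu=\mu_0$, where $P_3=0$ but $P_1,P_2>0$. Vanishing leading minors do not by themselves certify positive semidefiniteness, so I would argue via Cauchy eigenvalue interlacing: the $2\times 2$ leading block is positive definite (both eigenvalues positive since $P_1,P_2>0$), and interlacing forces the two larger eigenvalues of $\mathcal H_2(\mathbf M+\mu_0\mathbf S)$ to be positive; since their product with the third equals $\det=0$, the remaining eigenvalue must be exactly zero. Hence $\mathcal H_2(\mathbf M+\mu_0\mathbf S)$ is positive semidefinite of rank two with zero determinant, which by Theorem~\ref{Th1_Hankel} places $\mathbf M+\mu_0\mathbf S$ on the boundary of the moment space and confirms realizability on all of $[0,\mu_0]$. (If $S_3=0$ the quadratic degenerates to an affine function of $\mu$, and the same sign analysis applies with the obvious modification.) This completes the intended argument.
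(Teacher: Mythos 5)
Your proposal is correct and follows essentially the same route as the paper: since $\mathbf S$ perturbs only $M_3,M_4$, the first two leading principal minors of $\mathcal H_2(\mathbf M+\mu\mathbf S)$ are $\mu$-independent (the paper phrases this via the invariance of $M_0$ and $\mathcal E$), and everything reduces to the third minor $\det\mathcal H_2(\mathbf M+\mu\mathbf S)=\mathcal P(\mu)$, a downward quadratic with $\mathcal P(0)=a_2>0$, whose unique positive root gives $\mu_0$. Your explicit interlacing argument for positive semidefiniteness at $\mu=\mu_0$ and the remark on the degenerate case $S_3=0$ are details the paper leaves implicit, but they do not change the approach.
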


\begin{proof} 
    Using the notation in \eqref{eq2:moment_realizability}, a straightforward calculation yields 
    \begin{equation}\label{eq1}
        \mathcal{E}(\mathbf{M}+\mu\mathbf{S})=\frac{M_0M_2-M^2_1}{M^2_0} = \mathcal E(\mathbf M)>0,\quad
	\mathcal{Z}(\mathbf{M}+\mu\mathbf{S})=\frac{\mathcal{P}(\mu)}{M_0(M_0M_2-M^2_1)},
    \end{equation}
   where the quadratic $\mathcal P(\mu)$ has $a_0<0$ and $a_2>0$ and therefore a unique positive root $\mu_0$ such that $\mathcal P(\mu)>0$ for $\mu\in[0,\mu_0)$. This proves the result.
\end{proof}

\subsection{Realizability-preserving scheme in collisionless case}\label{sec3.2}
We now construct realizability-preserving finite-volume schemes for the five-moment systems \eqref{eq_mom_trunc}. We first consider the collisionless case  $\partial_t \mathbf M + \partial_x \mathbf F(\mathbf M) = 0$ and then incorporate the source term in Section \ref{sec3.3}. On a uniform grid with forward Euler time stepping (higher-order time discretization is discussed in Remark~\ref{rem:high_order}), the scheme reads
\begin{equation}\label{1d_HLL_2ndScheme}
	\begin{aligned}
		\overline{\bfM}^{n+1}_{i}&=\overline{\bfM}^{n}_{i} -\frac{\Delta t_n}{\Delta x}\left(\widehat{\bfF}( {\bfM}^{n,-}_{i+\frac12}, {\bfM}^{n,+}_{i+\frac12}) - \widehat{\bfF}({\bfM}^{n,-}_{i-\frac12}, {\bfM}^{n,+}_{i-\frac12}) \right), 
	\end{aligned}
\end{equation}
where ${\overline{\bfM}^n_i=(\overline{M_0}^{n}_{i},\ldots,\overline{M_4}^{n}_{i})^{\top}}$ approximates the cell average of $\mathbf{M}(t^n,x)$ on $[x_{i-\frac12},x_{i+\frac12}]$, and ${\widehat{\bfF}( {\bfM}^{n,-}_{i+\frac12}, {\bfM}^{n,+}_{i+\frac12})}$ is the numerical flux to be specified.

Our goal is to ensure that, if the cell averages $\overline \bfM^n_i$ are realizable for all $i$, then the updates $\overline \bfM^{n+1}_i$ remain realizable under suitable CFL conditions. Direct verification is difficult because the realizability domain (e.g., $\Omega_2$ in \eqref{eq2:moment_realizability}) involves nonlinear constraints, and the right-hand side of \eqref{1d_HLL_2ndScheme} is nonlinear in $\overline \bfM_i^n$ through the numerical flux. Inspired by the GQL idea \cite{wu2023geometric}, we convert the nonlinear constraints on $\bfF(\bfM)$ into linear constraints on $\bfM$.

In this work, we employ the HLL flux with two wave speeds $\delta^L<\delta^R$:
\begin{equation}\label{eq:HLLflux_1}
	\begin{aligned}
		\widehat{\bfF}(\bfM^-, \bfM^+)&=
		\begin{cases}
                \mathbf{F}(\bfM^-),&0\le\delta^L<\delta^R,\\
                \frac{\delta^R \bfF(\bfM^-) - \delta^L \bfF( \bfM^+) + \delta^R\delta^L(\bfM^+ - \bfM^-)}{\delta^R-\delta^L},&\delta^L<0<\delta^R,\\
			\mathbf{F}(\bfM^+),&\delta^{L}<\delta^{R}\le0,
		\end{cases} 
	\end{aligned}
\end{equation}
which can be rewritten by setting $\hat\delta^+=\max\{\delta^R,0\}$ and $\hat\delta^-=\min\{\delta^L,0\}$ as
\begin{equation}\label{eq:HLLflux_2}
    \widehat{\bfF}(\bfM^-,\bfM^+) = \frac{\hat\delta^+ \bfF(\bfM^-) - \hat\delta^- \bfF(\bfM^+) + \hat\delta^+\hat\delta^- (\bfM^+ - \bfM^-)}{\hat\delta^+ -\hat\delta^-}.
\end{equation}
For realizability-preserving considerations, we specify $\hat\delta^{\pm}$ as 
\begin{subequations}
\begin{align}
    \hat\delta^+ &= \hat\delta^+(\bfM^+,\bfM^-) = \max\{\delta^+(\bfM^+), \delta^+(\bfM^-)\}, \\
    \hat\delta^- &= \hat\delta^-(\bfM^+,\bfM^-) = \min\{\delta^-(\bfM^+), \delta^-(\bfM^-)\},
\end{align}
\end{subequations}
where $\delta^{\pm}(\bfM)$ are given by \eqref{eq:delta_mp} for the two-node Gaussian-EQMOM (depending on a constant $a>\sqrt{3}$) and by \eqref{eq:delta_mp_hyq} for the three-point HyQMOM.

To achieve second-order spatial accuracy, we reconstruct the interface values $\{\bfM^{n,\mp}_{i\pm\frac12}\}$ from the given cell averages $\{\overline{\bfM}^{n}_i\}$ using piecewise linear functions $\bfM(t^n,x)=\overline{\bfM}^{n}_i + (\widehat{\bfM}^{n}_x)_i(x-x_i)$ for $x\in [x_{i-\frac12},x_{i+\frac12}]$, yielding
\begin{equation}\label{linear_M}
    \bfM^{n,\mp}_{i\pm\frac12}=\overline{\bfM}^n_i \pm \frac{\Delta x}{2}(\widehat{\bfM}^n_x)_{i}.
\end{equation}
The local slopes $(\widehat{\bfM}_x)_i$ are computed componentwise via the van Albada limiter \cite{van1982comparative}:
\begin{equation}\label{eq:Albada}
    (\widehat{\bfM}^{n}_x)_i=\frac{\left(\left[\frac{\overline{\bfM}_{i+1}-\overline{\bfM}_{i}}{\Delta x}\right]^2+\varepsilon_R\right) \circ \frac{\overline{\bfM}_{i}-\overline{\bfM}_{i-1}}{\Delta x} +\left(\left[\frac{\overline{\bfM}_{i}-\overline{\bfM}_{i-1}}{\Delta x}\right]^2+\varepsilon_L\right)\circ \frac{\overline{\bfM}_{i+1}-\overline{\bfM}_{i}}{\Delta x} }{\left[\frac{\overline{\bfM}_{i}-\overline{\bfM}_{i-1}}{\Delta x}\right]^2+\left[\frac{\overline{\bfM}_{i+1}-\overline{\bfM}_{i}}{\Delta x}\right]^2+\varepsilon_L+\varepsilon_R}.
\end{equation}
Here $[\bfM]^2=(M_0^2,\ldots,M_4^2)^\top$ for any vector $\bfM=(M_0,\ldots,M_4)^\top$, ``$\circ$'' denotes the Hadamard (elementwise) product, and we take $\varepsilon_L=\varepsilon_R=3\Delta x$.

Under an additional assumption on the reconstructed values $\bfM^{n,\mp}_{i\pm\frac12}$, the above scheme \eqref{1d_HLL_2ndScheme} can be proven to preserve  moment realizability. Denote the approximate interface wave speeds as 
\begin{equation} \label{eq:delta_tn_pm_1}
    \delta_{i+\frac12}^{n,\pm} := \hat \delta^\pm (\bfM_{i+\frac12}^{n,+},\bfM_{i+\frac12}^{n,-}),
\end{equation}
which satisfy $\delta_{i+\frac12}^{n,-} \le 0 \le \delta_{i+\frac12}^{n,+}$ and $\delta_{i+\frac12}^{n,-} < \delta_{i+\frac12}^{n,+}$. Then we have the following theorem. 

\begin{theorem}\label{eq:th1}
    Assume that $\overline{\bfM}^{n}_i$ is strictly realizable (i.e., $\overline{\bfM}_i^n\in\Omega_2$) for all $i$ in \eqref{1d_HLL_2ndScheme}. If the reconstructed values $\bfM^{n,\pm}_{i+\frac12}$ are strictly realizable (i.e., $\bfM^{n,\pm}_{i+\frac12}\in\Omega_2$) for all $i$, then the updated $\overline{\bfM}^{n+1}_i$ is strictly realizable for Gaussian-EQMOM and realizable for HyQMOM under the CFL condition
	\begin{equation}\label{eq:cfl1}
		\frac{\Delta t_n}{\Delta x}\max_i\left\{\delta^{n,+}_{i+\frac12} - \delta^{n,-}_{i+\frac12}\right\} 
		\le \frac12,\quad \forall i.
	\end{equation}
\end{theorem}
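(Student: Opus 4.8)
The plan is to follow the convex-decomposition philosophy of Zhang and Shu: I would rewrite $\overline{\bfM}^{n+1}_i$ as a convex combination of manifestly realizable moment vectors and then invoke the convexity of the admissible set (Theorem~\ref{Th1_Hankel}). The starting point is the reconstruction identity $\overline{\bfM}^n_i = \tfrac12\bfM^{n,-}_{i+\frac12}+\tfrac12\bfM^{n,+}_{i-\frac12}$, which follows immediately from \eqref{linear_M}. Writing $\lambda=\Delta t_n/\Delta x$ and substituting into \eqref{1d_HLL_2ndScheme}, the goal is to exhibit each update as a combination of the two reconstructed endpoint states together with suitable HLL \emph{intermediate states}.

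The central building block is the HLL intermediate state at an interface with data $\bfM^-,\bfM^+$ and speeds $\hat\delta^-<0<\hat\delta^+$, namely $\bfM^{\rm hll}=\big(\hat\delta^+\bfM^+-\hat\delta^-\bfM^- -\bfF(\bfM^+)+\bfF(\bfM^-)\big)/(\hat\delta^+-\hat\delta^-)$, which satisfies the algebraic identities $\widehat{\bfF}=\bfF(\bfM^-)+\hat\delta^-(\bfM^{\rm hll}-\bfM^-)=\bfF(\bfM^+)+\hat\delta^+(\bfM^{\rm hll}-\bfM^+)$ for the flux \eqref{eq:HLLflux_2}. Grouping $\bfM^{\rm hll}$ as $\big(\hat\delta^+\bfM^+-\bfF(\bfM^+)\big)+\big(\bfF(\bfM^-)-\hat\delta^-\bfM^-\big)$ over the positive denominator, I would apply Lemma~\ref{prop1} (resp. Lemma~\ref{prop2}) together with $\hat\delta^+\ge\delta^+(\bfM^+)$, $\hat\delta^-\le\delta^-(\bfM^-)$, and the fact that $\Omega_2$ is a convex cone (sums and positive multiples of positive definite Hankel matrices stay positive definite) to conclude $\bfM^{\rm hll}\in\Omega_2$ (resp. $\overline{\Omega_2}$). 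Since the defined speeds $\delta^{n,\pm}_{i+\frac12}$ are a max/min over the two adjacent states, they dominate the required $\delta^\pm$, so every interface intermediate state $\bfM^{\rm hll}_{i\pm\frac12}$ is realizable.

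The key and delicate step is a half-cell splitting that forces the \emph{physical} fluxes to cancel. I would introduce an auxiliary interior flux $\widehat{\bfF}_i^{\rm int}:=\widehat{\bfF}(\bfM^{n,+}_{i-\frac12},\bfM^{n,-}_{i+\frac12})$ built from the two endpoint states of cell $i$, with wave speeds chosen \emph{minimally} as $\hat\delta^+_{\rm int}=\delta^+(\bfM^{n,-}_{i+\frac12})$ and $\hat\delta^-_{\rm int}=\delta^-(\bfM^{n,+}_{i-\frac12})$, so that its intermediate state is still realizable by the same argument. Then $\overline{\bfM}^{n+1}_i=\tfrac12 H_1+\tfrac12 H_2$ with $H_1=\bfM^{n,+}_{i-\frac12}-2\lambda\big(\widehat{\bfF}_i^{\rm int}-\widehat{\bfF}_{i-\frac12}\big)$ and $H_2=\bfM^{n,-}_{i+\frac12}-2\lambda\big(\widehat{\bfF}_{i+\frac12}-\widehat{\bfF}_i^{\rm int}\big)$, where $\widehat{\bfF}_i^{\rm int}$ cancels in the sum and reproduces \eqref{1d_HLL_2ndScheme} exactly regardless of its value. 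Each $H_k$ is a first-order update about a realizable center: inserting the two relevant HLL identities, the center flux $\bfF$ cancels and, for instance, $H_1=\bfM^{n,+}_{i-\frac12}\big(1-2\lambda|\hat\delta^-_{\rm int}|-2\lambda\hat\delta^+_{i-\frac12}\big)+2\lambda|\hat\delta^-_{\rm int}|\,\bfM^{\rm hll}_{\rm int}+2\lambda\hat\delta^+_{i-\frac12}\,\bfM^{\rm hll}_{i-\frac12}$, a combination of three realizable states whose weights sum to one; $H_2$ is symmetric.

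The main obstacle, and the reason for the factor $2$ (hence the $\tfrac12$) in \eqref{eq:cfl1}, is to absorb the interior ghost speeds into a CFL number defined purely from interface speeds. Here I would exploit the minimal choice: for $H_1$ one has $|\hat\delta^-_{\rm int}|+\hat\delta^+_{i-\frac12}=|\delta^-(\bfM^{n,+}_{i-\frac12})|+\delta^{n,+}_{i-\frac12}\le|\delta^{n,-}_{i-\frac12}|+\delta^{n,+}_{i-\frac12}=\delta^{n,+}_{i-\frac12}-\delta^{n,-}_{i-\frac12}$, since $\delta^{n,-}_{i-\frac12}$ is the minimum over the two adjacent states; an analogous bound holds for $H_2$ at $i+\frac12$. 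Thus \eqref{eq:cfl1} makes each center coefficient $1-2\lambda|\hat\delta^-|-2\lambda\hat\delta^+\ge0$, so $H_1,H_2$, and therefore $\overline{\bfM}^{n+1}_i$, are convex combinations of realizable vectors. Strictness for Gaussian-EQMOM follows because $a>\sqrt3$ in Lemma~\ref{prop1} places all intermediate states in the open convex set $\Omega_2$, whereas for HyQMOM Lemma~\ref{prop2} only yields membership in $\overline{\Omega_2}$, giving realizability.
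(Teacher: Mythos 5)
Your proof is correct and rests on the same two pillars as the paper's argument --- the midpoint identity $\overline{\bfM}^{n}_{i}=\tfrac12(\bfM^{n,+}_{i-\frac12}+\bfM^{n,-}_{i+\frac12})$ and Lemmas~\ref{prop1}--\ref{prop2} combined with the domination $\delta^{n,+}_{i\pm\frac12}\ge\delta^{+}(\bfM^{n,\pm}_{i\pm\frac12})$, $\delta^{n,-}_{i\pm\frac12}\le\delta^{-}(\bfM^{n,\pm}_{i\pm\frac12})$ --- but your decomposition is genuinely different. Writing $\lambda=\Delta t_n/\Delta x$, the paper splits $\overline{\bfM}^{n+1}_{i}=\Pi_1+\Pi_2+\Pi_3+\Pi_4$, where $\Pi_3,\Pi_4$ are nonnegative multiples of the two \emph{interface} HLL intermediate states (exactly the states you also use), but $\Pi_1=(\tfrac12-\lambda\delta^{n,+}_{i-\frac12})\bfM^{n,+}_{i-\frac12}+\lambda\bfF(\bfM^{n,+}_{i-\frac12})$ and its mirror $\Pi_2$ retain the cell-centered physical fluxes; their admissibility is established not by exhibiting a convex combination but by the matrix inequality $\mathcal H_2(\bfF(\bfM))>\delta^{n,-}_{i-\frac12}\,\mathcal H_2(\bfM)$ furnished by Lemma~\ref{prop1}, which immediately yields $\mathcal H_2(\Pi_1)>\bigl(\tfrac12-\lambda(\delta^{n,+}_{i-\frac12}-\delta^{n,-}_{i-\frac12})\bigr)\mathcal H_2(\bfM^{n,+}_{i-\frac12})\ge0$. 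You instead cancel those cell-centered fluxes by inserting an auxiliary interior HLL flux with minimally chosen speeds, turning the update into an explicit convex combination of five realizable states (the two endpoint states, the two interface intermediate states, and one interior intermediate state). Your route is the classical Zhang--Shu/HLL convex decomposition and is arguably more transparent about why the factor $\tfrac12$ appears; the paper's route is shorter because it never introduces the interior flux nor needs to check that the interior intermediate state is well defined (in your version, when $\hat\delta^{+}_{\rm int}=\hat\delta^{-}_{\rm int}=0$ that state is formally $0/0$, though its coefficients then vanish so nothing actually breaks). Both arguments produce the same CFL constant and the same strict-versus-nonstrict conclusion for the two closures.
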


\begin{proof}
    From \eqref{linear_M} we have 
    \begin{equation}\label{eq:Mn}
        \overline{\bfM}^{n}_{i}=\frac{1}{2}(\mathbf{M}^{n,+}_{i-\frac12}+\mathbf{M}^{n,-}_{i+\frac12}).
    \end{equation}
    Substituting \eqref{eq:HLLflux_2} and \eqref{eq:Mn} into \eqref{1d_HLL_2ndScheme}, we rewrite the scheme \eqref{1d_HLL_2ndScheme} as
    \begin{equation}\label{eq:M_Pi}
	\overline{\bfM}^{n+1}_{i}= \Pi_1+\Pi_2+\Pi_3+\Pi_4,
    \end{equation}  
    where 
    \begin{equation}\label{eq:def_Pi}
    \begin{aligned} 
        \Pi_1 & = \left(\frac12-\frac{\Delta t_n}{\Delta x}\delta^{n,+}_{i-\frac12}\right) \mathbf{M}^{n,+}_{i-\frac12}+\frac{\Delta t_n}{\Delta x}\mathbf{F}(\mathbf{M}^{n,+}_{i-\frac12}),  
	\\
        \Pi_2 & = \left(\frac12+\frac{\Delta t_n}{\Delta x}\delta^{n,-}_{i+\frac12}\right) \mathbf{M}^{n,-}_{i+\frac12}-\frac{\Delta t_n}{\Delta x}\mathbf{F}(\mathbf{M}^{n,-}_{i+\frac12}),  
	\\
        \Pi_3 & =\frac{\frac{\Delta t_n}{\Delta x}\delta^{n,+}_{i-\frac12}}{\delta^{n,+}_{i-\frac12}-\delta^{n,-}_{i-\frac12}}\Big[ \delta^{n,+}_{i-\frac12}\mathbf{M}^{n,+}_{i-\frac12}-\mathbf{F}(\mathbf{M}^{n,+}_{i-\frac12})-\delta^{n,-}_{i-\frac12}\mathbf{M}^{n,-}_{i-\frac12}+\mathbf{F}(\mathbf{M}^{n,-}_{i-\frac12})\Big],
	\\
        \Pi_4 & =\frac{-\frac{\Delta t_n}{\Delta x}\delta^{n,-}_{i+\frac12}}{\delta^{n,+}_{i+\frac12}-\delta^{n,-}_{i+\frac12}}\Big[ \delta^{n,+}_{i+\frac12}\mathbf{M}^{n,+}_{i+\frac12}-\mathbf{F}(\mathbf{M}^{n,+}_{i+\frac12})-\delta^{n,-}_{i+\frac12}\mathbf{M}^{n,-}_{i+\frac12}+\mathbf{F}(\mathbf{M}^{n,-}_{i+\frac12})\Big]. 
    \end{aligned}
    \end{equation} 

    We now proceed to show that $\mathcal H_2(\Pi_k)>0$ ($k=1,\ldots,4$) for Gaussian-EQMOM (resp.~$\mathcal H_2(\Pi_k)\ge 0$ for HyQMOM), which suffices to conclude realizability of $\overline{\bfM}^{n+1}_i$. Here, $A>B$ (resp.~$A\ge B$) denotes that the matrix $A-B$ is positive definite (resp.~semidefinite).

    First, if $\bfM_{i-\frac12}^{n,+}\in\Omega_2$, then by Lemma \ref{prop1} (Gaussian-EQMOM) and \eqref{eq:delta_tn_pm_1},
    \[
        \mathcal H_2(\bfF(\bfM_{i-\frac12}^{n,+})-\delta_{i-\frac12}^{n,-}\bfM_{i-\frac12}^{n,+})
        \ge \mathcal H_2(\bfF(\bfM_{i-\frac12}^{n,+})-\delta^-(\bfM_{i-\frac12}^{n,+})\bfM_{i-\frac12}^{n,+}) >0,
    \]
    implying $\mathcal H_2(\bfF(\bfM_{i-\frac12}^{n,+})) > \mathcal H_2(\delta_{i-\frac12}^{n,-}\bfM_{i-\frac12}^{n,+})$ and hence 
    \begin{equation} \label{eq:Pi1}
        \mathcal H_2(\Pi_1) > \left(\frac{1}{2}-\frac{\Delta t_n}{\Delta x}\left(\delta^{n,+}_{i-\frac12}-\delta^{n,-}_{i-\frac12}\right)	\right) \mathcal H_2(\mathbf{M}^{n,+}_{i-\frac12}) \ge 0,
    \end{equation}
    by the CFL condition \eqref{eq:cfl1}. The same argument applies to HyQMOM, and  The same arguments work for the HyQMOM, $\mathcal H_2(\Pi_2)$ is handled similarly for both models.

    Finally, $\mathcal H_2(\Pi_3)\ge 0$ and $\mathcal H_2(\Pi_4)\ge 0$ follow directly from Lemmas \ref{prop1} and \ref{prop2} as well as the facts $\delta_{i\pm\frac12}^{n,-}<0<\delta_{i\pm\frac12}^{n,+}$. This completes the proof.
\end{proof}

\subsection{Full realizability-preserving scheme} \label{sec3.3}
We now extend the scheme \eqref{1d_HLL_2ndScheme} to the full model \eqref{eq_mom_trunc}, including the source term \eqref{eq_bgk_mom_source}:
\begin{equation}\label{1d_HLL_2ndScheme_BGK}
    \overline{\bfM}^{n+1}_{i}=\overline{\bfM}^{n}_{i} -\frac{\Delta t_n}{\Delta x}\left(\widehat{\bfF}( {\bfM}^{n,-}_{i+\frac12}, {\bfM}^{n,+}_{i+\frac12}) - \widehat{\bfF}({\bfM}^{n,-}_{i-\frac12}, {\bfM}^{n,+}_{i-\frac12}) \right) +\Delta t_n\overline{\mathbf{S}}^n_i, 
\end{equation}
where the interface values $\{\bfM^{n,\mp}_{i\pm\frac12}\}$ and the numerical flux $\mathbf{\widehat{\bfF}}$ are defined as in \eqref{1d_HLL_2ndScheme}. The term $\overline{\mathbf{S}}^n_i$ is evaluated explicitly and can accommodate general sources; for the BGK-type collision operator \eqref{eq_bgk_mom_source}, however, a semi-implicit treatment is preferable (see \eqref{1d_HLL_2ndScheme_BGK_im}).

Recalling $\mathbf S = \mathbf S(\bfM)$ in the \textit{closed} model \eqref{eq_mom_trunc} and denoting $\mathbf S_{i\pm\frac12}^{n,\mp}:=\mathbf S(\bfM_{i\pm\frac12}^{n,\mp})$, we compute the cell average by the trapezoidal rule
\begin{equation}
    \overline{\mathbf{S}}^n_i=\frac12\left({\mathbf{S}}^{n,+}_{i-\frac12} + {\mathbf{S}}^{n,-}_{i+\frac12}\right).   
\end{equation}
We now state the realizability-preserving property of the full scheme \eqref{1d_HLL_2ndScheme_BGK}.
	
\begin{theorem}\label{eq:th2}
    If $\overline{\bfM}^{n}_i$ and the reconstructed values $\bfM^{n,\pm}_{i+\frac12}$ in \eqref{1d_HLL_2ndScheme_BGK} are strictly realizable, then the updated $\overline{\bfM}^{n+1}_i$ is strictly realizable for both Gaussian-EQMOM and HyQMOM under the CFL condition
    \begin{equation}\label{eq:cfl2} 
        \Delta t_n\max_i\left\{\frac{\delta^{n,+}_{i+\frac12} - \delta^{n,-}_{i+\frac12}}{\Delta x} +\frac{1}{\mu_i}\right\} < \frac12,\quad \forall i,
    \end{equation} 		
    where
    \begin{equation} \label{eq_mu_constraint}
        \mu_i := \min \left\{ \mu_*({\mathbf{M}}^{n,+}_{i-\frac12}, \tau), \mu_*({\mathbf{M}}^{n,-}_{i+\frac12}, \tau) \right\},
    \end{equation}
    and $\mu_*$ denotes the unique positive root of $\mathcal{P}(\mu)=a_0\mu^2+a_1\mu+a_2$ with coefficients $a_i=a_i(\bfM, \tau)$ given in \eqref{eq:abc}.
\end{theorem}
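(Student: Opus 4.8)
The plan is to reproduce the convex-decomposition argument used for Theorem~\ref{eq:th1}, now folding the explicitly evaluated source into the decomposition. First I would use the trapezoidal splitting $\Delta t_n\overline{\mathbf S}^n_i=\tfrac{\Delta t_n}{2}\mathbf S^{n,+}_{i-\frac12}+\tfrac{\Delta t_n}{2}\mathbf S^{n,-}_{i+\frac12}$ together with \eqref{eq:Mn} and \eqref{eq:HLLflux_2} to write $\overline{\bfM}^{n+1}_i=\widetilde\Pi_1+\widetilde\Pi_2+\Pi_3+\Pi_4$, where $\Pi_3,\Pi_4$ are exactly the flux pieces from \eqref{eq:def_Pi} (so $\mathcal H_2(\Pi_3),\mathcal H_2(\Pi_4)\ge0$ carry over verbatim) and
\[
\widetilde\Pi_1=\Pi_1+\tfrac{\Delta t_n}{2}\mathbf S^{n,+}_{i-\frac12},\qquad
\widetilde\Pi_2=\Pi_2+\tfrac{\Delta t_n}{2}\mathbf S^{n,-}_{i+\frac12}.
\]
It is essential to attach each source half to the piece carrying the \emph{same} reconstructed state, since Lemma~\ref{pro3} applies only when the source $\mathbf S(\mathbf M)$ and the base vector $\mathbf M$ coincide; note also that for both closures the BGK source has the form $(0,0,0,S_3,S_4)^\top$ required by that lemma.

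Second, I would analyze $\widetilde\Pi_1$ (with $\widetilde\Pi_2$ following by the mirror computation). Abbreviating $\lambda=\Delta t_n/\Delta x$ and $\mathbf M=\mathbf M^{n,+}_{i-\frac12}$ and substituting $\mathbf F(\mathbf M)=(\mathbf F(\mathbf M)-\delta^{n,-}_{i-\frac12}\mathbf M)+\delta^{n,-}_{i-\frac12}\mathbf M$ as in the derivation of \eqref{eq:Pi1}, I would recast
\[
\widetilde\Pi_1=\underbrace{\Big(\tfrac12-\lambda\big(\delta^{n,+}_{i-\frac12}-\delta^{n,-}_{i-\frac12}\big)\Big)\mathbf M+\tfrac{\Delta t_n}{2}\mathbf S(\mathbf M)}_{=:T_1}+\lambda\big(\mathbf F(\mathbf M)-\delta^{n,-}_{i-\frac12}\mathbf M\big).
\]
The second summand has positive-definite Hankel matrix for Gaussian-EQMOM (Lemma~\ref{prop1}) and positive-semidefinite for HyQMOM (Lemma~\ref{prop2}), so it only remains to prove that $T_1$ is strictly realizable.

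Third, I would handle $T_1$ via Lemma~\ref{pro3}. Setting $c:=\tfrac12-\lambda(\delta^{n,+}_{i-\frac12}-\delta^{n,-}_{i-\frac12})$, the CFL bound \eqref{eq:cfl2} (dropping the nonnegative $1/\mu_i$ term) gives $c>0$, so I can factor $T_1=c\big(\mathbf M+\tfrac{\Delta t_n}{2c}\mathbf S(\mathbf M)\big)$; since $\mathcal H_2$ is linear, positive scaling preserves definiteness, and it suffices to verify $\tfrac{\Delta t_n}{2c}<\mu_*(\mathbf M,\tau)$ and invoke Lemma~\ref{pro3}. A short rearrangement shows this inequality is equivalent to $\lambda(\delta^{n,+}_{i-\frac12}-\delta^{n,-}_{i-\frac12})+\tfrac{\Delta t_n}{2\mu_*}<\tfrac12$, which follows from \eqref{eq:cfl2} because $\mu_i\le\mu_*(\mathbf M,\tau)$ by \eqref{eq_mu_constraint} and hence $\tfrac{\Delta t_n}{2\mu_*}\le\tfrac{\Delta t_n}{\mu_i}$. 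Thus $\mathcal H_2(T_1)>0$, and adding the positive-(semi)definite second summand yields $\mathcal H_2(\widetilde\Pi_1)>0$; the mirror argument gives $\mathcal H_2(\widetilde\Pi_2)>0$.

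Finally, by linearity $\mathcal H_2(\overline{\bfM}^{n+1}_i)=\mathcal H_2(\widetilde\Pi_1)+\mathcal H_2(\widetilde\Pi_2)+\mathcal H_2(\Pi_3)+\mathcal H_2(\Pi_4)$ is a sum of two positive-definite and two positive-semidefinite matrices, hence positive definite, so $\overline{\bfM}^{n+1}_i\in\Omega_2$ for both closures by Theorem~\ref{Th1_Hankel}. I expect the only delicate step to be the bookkeeping in the last inequality: matching the trapezoidal weight $\tfrac12$ against the coefficient $c$ so that the realizability radius $\mu_*$ enters the CFL condition with the correct constant. The positive-(semi)definiteness of the convective pieces $\Pi_3,\Pi_4$ and of the flux part of $\widetilde\Pi_1,\widetilde\Pi_2$ is inherited directly from Theorem~\ref{eq:th1}, so it requires no new work.
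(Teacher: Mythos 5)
Your proposal is correct and follows essentially the same route as the paper: the identical decomposition $\overline{\bfM}^{n+1}_i=\widetilde\Pi_1+\widetilde\Pi_2+\Pi_3+\Pi_4$, with Lemma~\ref{pro3} controlling the source contribution and the CFL condition \eqref{eq:cfl2} supplying the positivity of the remaining coefficient. The only (immaterial) difference is that you factor $T_1=c\bigl(\mathbf M+\tfrac{\Delta t_n}{2c}\mathbf S\bigr)$ and apply Lemma~\ref{pro3} directly, whereas the paper first converts the lemma into the matrix inequality $\mathcal H_2(\mathbf S)\ge-\tfrac{1}{\mu_i}\mathcal H_2(\mathbf M)$ and adds it to the bound on $\mathcal H_2(\Pi_1)$; your bookkeeping even retains the factor $\tfrac12$ on the source term, giving slightly more slack than the stated CFL requires.
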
 

\begin{proof}
    Analogous to \eqref{eq:M_Pi}, we rewrite the scheme \eqref{eq:cfl2} as $\overline{\bfM}^{n+1}_{i}= \widetilde{\Pi}_1+\widetilde{\Pi}_2+\Pi_3+\Pi_4$ with
    \[
        \widetilde{\Pi}_1={\Pi}_1+ \frac{\Delta t_n}{2}{\mathbf{S}}^{n,+}_{i-\frac12}, \quad
	\widetilde{\Pi}_2={\Pi}_2+ \frac{\Delta t_n}{2}{\mathbf{S}}^{n,-}_{i+\frac12},
    \]
    where $\Pi_1,\Pi_2,\Pi_3$, and $\Pi_4$ are as defined in \eqref{eq:def_Pi}. It suffices to verify $\mathcal H_2(\tilde \Pi_k)>0$ for $k=1,2$. Again, we use $A>B$ (resp.~$A\ge B$) to denote that the matrix $A-B$ is positive definite (resp.~semidefinite). 
    By Lemma \ref{pro3}, ${\mathbf{M}}^{n,+}_{i-\frac12} + \mu {\mathbf{S}}^{n,+}_{i-\frac12}$ is realizable for any $0<\mu \le \mu_*({\bfM}^{n,+}_{i-\frac12}, {\mathbf{S}}^{n,+}_{i-\frac12})$, which implies
    \[
        \mathcal H_2( {\mathbf{S}}^{n,+}_{i-\frac12}) \ge -\frac{1}{\mu_i} \mathcal H_2({\mathbf{M}}^{n,+}_{i-\frac12}).
    \]
    Combining this with \eqref{eq:Pi1} yields
    \begin{equation}
        \mathcal H_2(\widetilde{\Pi}_1) \ge \left(\frac{1}{2}-\frac{\Delta t_n}{\Delta x}\left(\delta^{n,+}_{i-\frac12}-\delta^{n,-}_{i-\frac12}\right) -\frac{\Delta t_n}{\mu_i} \right) \mathcal H_2(\mathbf{M}^{n,+}_{i-\frac12})>0,
    \end{equation}
    by \eqref{eq:cfl2}. The same argument applies to $\mathcal H_2(\tilde\Pi_2)$.
\end{proof}

\begin{remark}
	For the two-node Gaussian-EQMOM, the strict inequality ``$<$'' in \eqref{eq:cfl2} can be relaxed to ``$\le$''. 
\end{remark}

For the BGK-type collision \eqref{eq_bgk_mom_source}, a semi-implicit treatment of the source is often preferable, leading to the following scheme: 
\begin{equation} \label{1d_HLL_2ndScheme_BGK_im}
\begin{aligned}
    \overline{\bfM}^{n+1}_{i}=&\overline{\bfM}^{n}_{i} -\frac{\Delta t_n}{\Delta x}\left(\widehat{\bfF}( {\bfM}^{n,-}_{i+\frac12}, {\bfM}^{n,+}_{i+\frac12}) - \widehat{\bfF}({\bfM}^{n,-}_{i-\frac12}, {\bfM}^{n,+}_{i-\frac12}) \right) \\
    &+\frac{\Delta t_n}{\tau}\left(\rho{\bf \Delta}_i^n - \overline{\bfM}^{n+1}_{i} \right), 
\end{aligned}
\end{equation}
where ${\bf \Delta}=(\Delta_0(U,\theta),\ldots,\Delta_4(U,\theta))^\top$ is a strictly realizable moment vector. The realizability-preserving property of \eqref{1d_HLL_2ndScheme_BGK_im} follows immediately from Theorem~\ref{eq:th1}. 
\begin{corollary} \label{cor:real}
    If $\overline{\bfM}^{n}_i$ and the reconstructed values $\bfM^{n,\pm}_{i+\frac12}$ in \eqref{1d_HLL_2ndScheme_BGK_im} are strictly realizable, then the updated $\overline{\bfM}^{n+1}_i$ remains strictly realizable for both Gaussian-EQMOM and HyQMOM under the CFL condition \eqref{eq:cfl1}.
\end{corollary}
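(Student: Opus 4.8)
The plan is to reduce the semi-implicit update \eqref{1d_HLL_2ndScheme_BGK_im} to the collisionless scheme already handled by Theorem~\ref{eq:th1}, followed by a single convex-combination step. First I would isolate the explicit flux contribution by introducing the intermediate state
\begin{equation*}
    \overline{\bfM}^{*}_{i}:=\overline{\bfM}^{n}_{i} -\frac{\Delta t_n}{\Delta x}\left(\widehat{\bfF}( {\bfM}^{n,-}_{i+\frac12}, {\bfM}^{n,+}_{i+\frac12}) - \widehat{\bfF}({\bfM}^{n,-}_{i-\frac12}, {\bfM}^{n,+}_{i-\frac12}) \right),
\end{equation*}
which is exactly the right-hand side of the collisionless scheme \eqref{1d_HLL_2ndScheme}. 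By Theorem~\ref{eq:th1}, under the CFL condition \eqref{eq:cfl1} the state $\overline{\bfM}^{*}_{i}$ is strictly realizable for Gaussian-EQMOM and realizable for HyQMOM. This is the one nontrivial ingredient, and it is inherited directly rather than re-derived.

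Next I would solve the implicit relation $\overline{\bfM}^{n+1}_{i}=\overline{\bfM}^{*}_{i}+\frac{\Delta t_n}{\tau}\bigl(\rho{\bf \Delta}_i^n-\overline{\bfM}^{n+1}_{i}\bigr)$ for the unknown $\overline{\bfM}^{n+1}_{i}$. Collecting terms gives $(1+\Delta t_n/\tau)\,\overline{\bfM}^{n+1}_{i}=\overline{\bfM}^{*}_{i}+(\Delta t_n/\tau)\,\rho{\bf \Delta}_i^n$, which rearranges into the convex combination
\begin{equation*}
    \overline{\bfM}^{n+1}_{i}=(1-\lambda)\,\overline{\bfM}^{*}_{i}+\lambda\,\rho{\bf \Delta}_i^n,\qquad \lambda:=\frac{\Delta t_n}{\tau+\Delta t_n}\in(0,1).
\end{equation*}
Since $\rho>0$ is the (positive) density and ${\bf \Delta}_i^n$ is strictly realizable by hypothesis, the scaling identity $\mathcal H_2(\rho{\bf \Delta}_i^n)=\rho\,\mathcal H_2({\bf \Delta}_i^n)$ shows that $\rho{\bf \Delta}_i^n\in\Omega_2$.

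Finally I would invoke convexity of the moment space. Recall from Theorem~\ref{Th1_Hankel} that $\Omega_2$ is an open convex set. For Gaussian-EQMOM both $\overline{\bfM}^{*}_{i}$ and $\rho{\bf \Delta}_i^n$ lie in $\Omega_2$, so their convex combination lies in $\Omega_2$ as well. For HyQMOM the point $\overline{\bfM}^{*}_{i}$ may lie on $\partial\Omega_2$; here I would use the line-segment principle of convex analysis (pulling a closure point toward an interior point with a strictly positive weight lands in the interior), which together with $\rho{\bf \Delta}_i^n\in\Omega_2$ and $\lambda\in(0,1)$ yields $\overline{\bfM}^{n+1}_{i}\in\Omega_2$.

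The step I expect to carry the weight of the argument is precisely this last one in the HyQMOM setting: Theorem~\ref{eq:th1} only certifies that the collisionless update is realizable, possibly on the boundary, so strict realizability of $\overline{\bfM}^{n+1}_{i}$ is not automatic. It is the semi-implicit relaxation toward the strictly realizable Maxwellian moment vector $\rho{\bf \Delta}_i^n$, combined with the open-segment property, that \emph{upgrades} a boundary state to a strictly interior one for any positive time step. No new CFL restriction beyond \eqref{eq:cfl1} is needed, because the relaxation factor $\lambda$ always obeys $0<\lambda<1$ regardless of $\Delta t_n$ and $\tau$.
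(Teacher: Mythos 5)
Your proposal is correct and follows essentially the same route the paper intends: the paper's proof of Corollary~\ref{cor:real} is simply the remark that it ``follows immediately from Theorem~\ref{eq:th1}'', and your decomposition into the collisionless intermediate state $\overline{\bfM}^{*}_{i}$ plus the convex combination $(1-\lambda)\overline{\bfM}^{*}_{i}+\lambda\,\rho{\bf \Delta}_i^n$ with $\lambda=\Delta t_n/(\tau+\Delta t_n)$ is exactly the standard way to make that implication explicit. Your additional observation that the open-segment property upgrades the possibly-boundary HyQMOM state to a strictly interior one is a detail the paper glosses over, and it is needed to justify the word ``strictly'' in the corollary for HyQMOM.
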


From a multiscale viewpoint, the semi-implicit treatment of the BGK source stabilizes stiff relaxation and inherits the collisionless CFL, a property aligned with asymptotic-preserving design principles.
 
An essential practical point for Theorems \ref{eq:th1}, \ref{eq:th2} and Corollary~\ref{cor:real} is to ensure the strict realizability of the reconstructed interfacial states $\mathbf{M}^{n,\pm}_{i\mp\frac12}$. Since this is not always guaranteed by \eqref{linear_M} with \eqref{eq:Albada}, we adopt the limiter
\begin{equation}\label{eq:limiter}
	\widetilde{\mathbf{M}}^{n,\pm}_{i\mp\frac12}=
	\theta^{\star}\left(\mathbf{M}^{n,\pm}_{i\mp\frac12} - \overline{\bfM}^{n}_{i}\right) + \overline{\bfM}^{n}_{i}, 
\end{equation}
and the modified, strictly realizable $\widetilde{\mathbf{M}}^{n,\pm}_{i\mp\frac12}$ is used as the interfacial value in \eqref{1d_HLL_2ndScheme_BGK}. The parameter $\theta^{\star}\in[0,1]$ in \eqref{eq:limiter} is uniquely determined by a root-finding algorithm such as a bisection procedure that, to a prescribed tolerance, identifies the largest $\theta^\pm$ such that $\widetilde{\mathbf{M}}^{n,\pm}_{i\mp\frac12}\in\Omega_2$. We then set $\theta^{\star}=\min\{\theta^{-},\theta^{+}\}$ to ensure \eqref{eq:Mn}. As shown numerically in the next section, this limiter is crucial for stability without compromising accuracy. 

The HLL-type flux and limiter are formulated so that realizability is enforced independently of the stiffness of the BGK source, making the scheme suitable for \emph{multiscale} applications with relaxation times $\tau$ ranging from order-one to vanishingly small.

We close this section with two remarks.
\begin{remark} \label{rem:high_order}
   	The scheme \eqref{1d_HLL_2ndScheme_BGK} and its realizability analysis extend naturally to higher-order space–time discretizations. For time integration, strong-stability-preserving Runge–Kutta methods are suitable because they can be expressed as convex combinations of forward Euler steps; by convexity of the strictly realizable moment set, realizability carries over. Similarly, higher-order spatial discretizations are admissible provided the cell average $\overline{\bfM}_i^n$ can be written as a convex combination of interfacial and interior nodal values \cite{ZS1,cui2024optimal}.  
\end{remark}

\begin{remark}
	Our approach generalizes to truncated moment systems \eqref{eq_mom_trunc} with a larger number of moments. For Gaussian-EQMOM with $N\ge 3$, Lemma~\ref{prop1} is expected to hold with suitably larger values of $a$. For HyQMOM, general closure frameworks have been established in \cite{fox2022hyqmom} for arbitrary $N\ge 3$ corresponding to approximating the NDF by linear combinations of Dirac delta functions \cite{zhang2024hyqmom}; hence the key realizability statement of Lemma~\ref{prop1} applies directly. A first-order realizability-preserving scheme with kinetic-based flux for HyQMOM was proposed in Theorem 3.7 of \cite{zhang2024hyqmom} and can also  be generalized to higher order within the present framework. 
\end{remark}

\section{Numerical tests}\label{sec4}
This section presents a series of numerical experiments that assess the accuracy and robustness of the proposed realizability-preserving finite volume method for the five-moment system \eqref{eq_mom_trunc}. The suite comprises a smooth accuracy test (Example~\ref{ex1:SmoothTest}) and four nonsmooth benchmarks (Examples~\ref{ex:1dRiemann}–\ref{ex:1dDoubleRare}): a Riemann problem, a shock tube problem, the Shu–Osher problem, and a double-rarefaction problem. 
To probe \emph{multiscale} performance, we report results for both $\tau=\infty$ (pure transport) and $\tau=0.05$ (stiff relaxation), demonstrating uniform realizability preservation across regimes.
 
In all tests, the one-dimensional computational domain is discretized into $N_x$ uniform grids. We employ the second-order SSP RK scheme for time discretization, with the time step chosen to satisfy the CFL conditions \eqref{eq:cfl1} and \eqref{eq:cfl2}.

Special care is taken with the moments. For the Gaussian-EQMOM, after computing the primitive variables $\mathbf{W}$, we recompute the conservative moment vector $\mathbf{M}$ via \eqref{eq_InvM_Gaussian} to eliminate the errors caused by moment inversion. In addition, the limiter \eqref{eq:limiter} is applied to the interfacial reconstructions in all test cases.
 
In all figures, ``Ref–G” and ``Ref–H” denote the reference solutions for the Gaussian-EQMOM and HyQMOM closures, respectively. These references are obtained by solving \eqref{eq_mom_trunc} on 20,480 uniform cells for Example~\ref{ex1:SmoothTest} and on 8,000 uniform cells for Examples~\ref{ex:1dRiemann}–\ref{ex:1dDoubleRare}.

\subsection{Accuracy test}
\begin{exmp}[Convergence test for a smooth problem]\label{ex1:SmoothTest}
	This first example validates the convergence rates of the proposed schemes for \eqref{eq_mom_trunc} with the two-node Gaussian-EQMOM and three-point HyQMOM closures. We set $\tau = 0.1$. The computational domain is $[-1,1]$ boundary conditions. The smooth initial data $\bfM(0,x)$ are specified separately for the two closures as follows:
\begin{itemize}[leftmargin=*] 
	\item  Gaussian-EQMOM: $\rho_1(0,x)=1+0.2\sin(\pi x)$, $\rho_2(0,x)=1+0.2\cos(\pi x)$, $v_1(0,x)=1$, $v_2(0,x)=-1$ and $\sigma(0,x)=1$;
	\item HyQMOM: $\rho_1(0,x)=1+0.2\sin(\pi x)$, $\rho_2(0,x)=1+0.2\cos(\pi x)$, $\rho_3(0,x)=1+0.2\sin(\pi x)$, $v_1(0,x)=1$, $v_3(0,x)=-1$ and $U(0,x)=1$.
\end{itemize}
We run the simulations up to time $t=0.01$ and measure the following two types of numerical errors for a scalar quantity $M$:
\begin{equation*}
    e_1(M)=\frac{1}{N_x}\sum_{i=1}^{N_x}\big| \overline{M}_i-\overline{M}_i^{ref} \big|, \quad
    e_2(M)=\frac{1}{N_x} \sqrt{\sum_{i=1}^{N_x}(\overline{M}_i-\overline{M}_i^{ref})^2},
\end{equation*}
where $\overline{M}_i$ and $\overline{M}_i^{ref}$ are the cell averages in cell $i$ of the numerical and reference solutions, respectively. As noted above, the reference solutions are computed on 20,480 uniform cells. 
Tables \ref{ex:1dTest} and \ref{ex:1dTest_2} report the errors and convergence rates of our realizability-preserving method  for the Gaussian-EQMOM and HyQMOM closures, respectively. The expected second-order accuracy is clearly observed for both models across grid refinements.  

\begin{table}[t]
\centering
\begin{threeparttable}
\caption{Example \ref{ex1:SmoothTest}. Errors and convergence rates of the second-order realizability-preserving scheme for the Gaussian-EQMOM closure model.}
	\label{ex:1dTest}
	{\begin{tabular}{c|cc|cc|cc|cc|cccc}
			\toprule 	 
			$N_x$&  $e_1({M_0})$&rate& $e_1({M_1})$&rate & $e_1({M_2})$&rate&\cr 
			\hline
			 10&	2.11E-03&    -- & 	1.40E-03&     -- &	5.51E-03&    -- \\ 
			 20&	4.67E-04&   2.17&	5.35E-04&   1.39&	1.07E-03&   2.37\\ 
			 40&	1.56E-04&   1.58&	2.23E-04&   1.26&	3.61E-04&   1.56\\ 
			 80&	3.68E-05&   2.08&	5.32E-05&   2.07&	8.84E-05&   2.03\\ 
			160&	9.12E-06&   2.01&	1.24E-05&   2.10&	1.93E-05&   2.20\\ 
			320&	1.91E-06&   2.26&	2.48E-06&   2.32&	3.64E-06&   2.40\\ 
			640&	3.55E-07&   2.43&	4.81E-07&   2.36&	6.77E-07&   2.43\\ 
			1280&	7.42E-08&   2.26&	8.47E-08&   2.51&	1.34E-07&   2.34\\ 
			2560&	1.58E-08&   2.23&	1.51E-08&   2.49&	2.54E-08&   2.40\\ 
			\hline
			$N_x$&  $e_2({M_0})$&rate& $e_2({M_1})$&rate & $e_2({M_2})$&rate\cr 
			\hline 
			 10&	 2.31E-03&    -- &   1.45E-03&    -- &	6.00E-03&    -- \\ 
			 20&	 6.00E-04&   2.17&	 6.15E-04&   1.39&	1.40E-03&   2.37\\ 
			 40&	 2.48E-04&   1.58&	 2.96E-04&   1.26&	5.85E-04&   1.56\\ 
			 80&	 7.66E-05&   2.08&	 8.97E-05&   2.07&	1.70E-04&   2.03\\ 
			160&	 2.20E-05&   2.01&	 2.49E-05&   2.10&	4.36E-05&   2.20\\ 
			320&	 5.64E-06&   2.26&	 6.25E-06&   2.32&	9.55E-06&   2.40\\ 
			640&	 1.30E-06&   2.43&	 1.35E-06&   2.36&	2.20E-06&   2.43\\ 
			1280&	 3.14E-07&   2.26&	 2.35E-07&   2.51&	3.83E-07&   2.34\\ 
			2560&	 8.51E-08&   2.23&	 4.12E-08&   2.49&	6.45E-08&   2.40\\ 
			\bottomrule
	\end{tabular}}
\end{threeparttable}
\end{table}

\begin{table}[t]
\centering
\caption{Example \ref{ex1:SmoothTest}. Errors and convergence rates of the second-order realizability-preserving scheme for the HyQMOM closure model.}
\label{ex:1dTest_2}
{\begin{tabular}{c|cc|cc|cc|cc|ccc}
		\toprule 	 
		$N_x$&  $e_1({M_0})$&rate& $e_1({M_1})$&rate & $e_1({M_2})$&rate\cr 
		\hline
		 10&	7.17E-04&    -- & 	3.30E-04&    -- & 	3.99E-04&    -- &	\\ 
		 20&	2.00E-04&   1.84&	1.55E-04&   1.09&	1.71E-04&   1.22&	\\ 
		 40&	6.91E-05&   1.53&	4.44E-05&   1.81&	4.41E-05&   1.96&	\\ 
		 80&	1.92E-05&   1.85&	1.19E-05&   1.91&	1.04E-05&   2.08&	\\ 
		160&	5.37E-06&   1.84&	3.17E-06&   1.90&	2.53E-06&   2.04&	\\ 
		320&	1.58E-06&   1.77&	9.17E-07&   1.79&	6.81E-07&   1.89&	\\ 
		640&	5.52E-07&   1.51&	2.91E-07&   1.66&	1.63E-07&   2.06&	\\ 
		1280&	1.86E-07&   1.57&	7.01E-08&   2.05&	3.15E-08&   2.37&	\\ 
		2560&	4.87E-08&   1.93&	1.29E-08&   2.44&	5.63E-09&   2.49&	\\ 
		\hline
		$N_x$&  $e_2({M_0})$&rate& $e_2({M_1})$&rate & $e_2({M_2})$&rate& \cr 
		\hline 
		 10&	8.53E-04&    --&	3.70E-04&    -- &	4.87E-04&    -- &	\\ 
		 20&	2.80E-04&   1.84&	2.55E-04&   1.09&	2.34E-04&   1.22&	\\ 
		 40&	1.23E-04&   1.53&	8.63E-05&    1.81&	7.61E-05&   1.96&	\\ 
		 80&	4.26E-05&   1.85&	2.95E-05&   1.91&	2.44E-05&   2.08&	\\ 
		160&	1.49E-05&   1.84&	1.04E-05&   1.90&	8.15E-06&   2.04&	\\ 
		320&	5.54E-06&   1.77&	4.00E-06&   1.79&	2.80E-06&   1.89&	\\ 
		640&	2.47E-06&   1.51&	1.55E-06&   1.66&	7.67E-07&   2.06&	\\ 
		1280&	1.13E-06&   1.57&	4.37E-07&   2.05&	1.53E-07&   2.37&	\\ 
		2560&	3.88E-07&   1.93&	7.98E-08&   2.44&	2.98E-08&   2.49&	\\  
		\bottomrule
\end{tabular}} 
\end{table}
\end{exmp} 

\subsection{Non-smooth problems}
\begin{exmp}[Riemann problem]\label{ex:1dRiemann}
	We consider a Riemann problem on the domain $[-1,1]$ with outflow boundary conditions. The initial data are
	\[
	\left(M_0,M_1,M_2,M_3,M_4\right)=
	\begin{cases}
		\left(1,\,1,\,\tfrac{4}{3},\,2,\,\tfrac{10}{3}\right), & x<0,\\[2pt]
		\left(1,\,-1,\,\tfrac{4}{3},\,-2,\,\tfrac{10}{3}\right), & x>0.
	\end{cases}
	\]
	We solve the model \eqref{eq_mom_trunc} with $\tau=\infty$ (no source term) and $\tau=0.05$, respectively. The solutions at $t=0.1$ for both closures are shown in Figures~\ref{Fig:1dRiemann_1} and \ref{Fig:1dRiemann_2}, respectively. The test case with $\tau=\infty$ was studied previously for the two-node Gaussian-EQMOM \cite{chalons2017multivariate} and three-point HyQMOM \cite{fox2018conditional} using a first-order scheme with kinetic flux; our second-order results in Figure~\ref{Fig:1dRiemann_1} are consistent with those findings. 
	As shown in Figures~\ref{Fig:1dRiemann_1}--\ref{Fig:1dRiemann_2}, our numerical solutions computed on 800 uniform cells closely match the reference solutions obtained on 8000 cells, highlighting the accuracy and robustness of the proposed scheme. Regarding the closures, Gaussian-EQMOM and HyQMOM yield noticeably different predictions on $\overline{M}_5$ for both $\tau$ values, while the lower-order moments---especially $M_0$, $M_1$, and $M_2$---agree closely.
	\begin{figure}[!htbp]
		\centering 
		\includegraphics[width=1.0\linewidth]{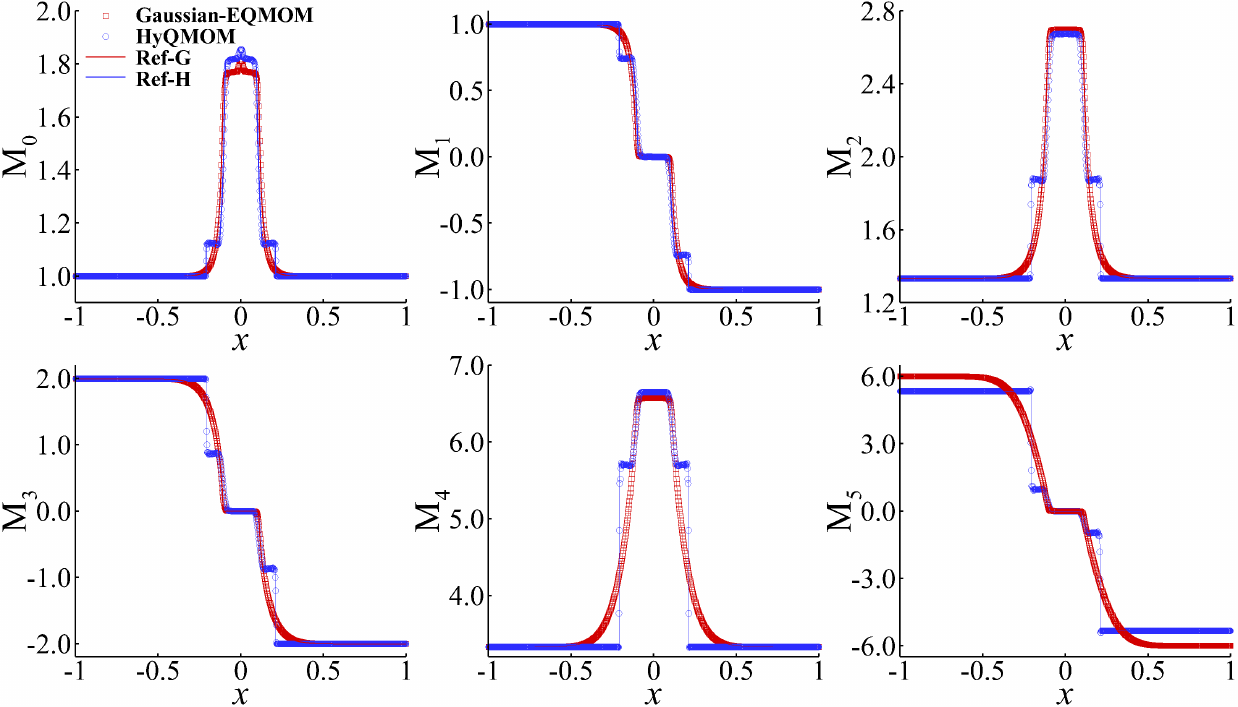}
		\caption{Computed moments $\{ M_\ell(t=1,x)\}^{5}_{\ell=0}$ for Example~\ref{ex:1dRiemann} with $\tau=\infty$ (no source term). 
		}\label{Fig:1dRiemann_1}
	\end{figure}  
	\begin{figure}[!htbp]
		\centering \includegraphics[width=1.0\linewidth]{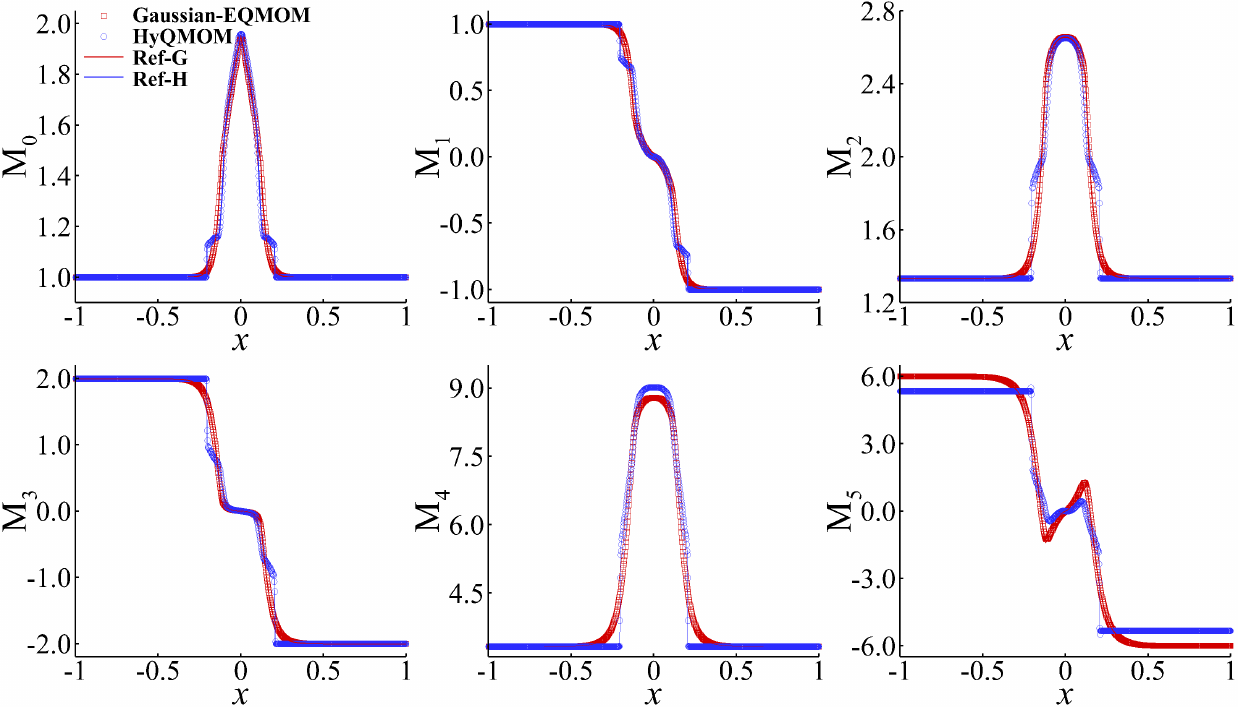}
		\caption{Computed  moments $\{M_\ell(t=1,x)\}^{5}_{\ell=0}$ for Example~\ref{ex:1dRiemann} with $\tau=0.05$.    
		}\label{Fig:1dRiemann_2}
	\end{figure}  
\end{exmp}

\begin{exmp}[Shock tube problem]\label{ex:1dShockTube}
	We consider another test with discontinuous initial data on the domain $[-1,1]$ with outflow boundary conditions. The model \eqref{eq_mom_trunc} is solved with $\tau=\infty$ (no source term). As in Example~\ref{ex1:SmoothTest}, the initial data $\bfM(0,x)$ are specified separately for the two closures:
	\[
	\text{Gaussian-EQMOM:}\quad
	(\rho_1,\rho_2,v_1,v_2,\sigma)=
	\begin{cases}
		(0.35,\,0.65,\,-1.5,\,2,\,0.8), & x<0,\\
		(0.09,\,0.01,\,-1.5,\,2,\,0.8), & x>0,
	\end{cases}
	\]
	\[
	\text{HyQMOM:}\quad
	(\rho_1,\rho_2,\rho_3,v_1,v_2,v_3)=
	\begin{cases}
		(0.2,\,0.35,\,0.45,\,-1.5,\,-1,\,2), & x<0,\\
		(0.05,\,0.01,\,0.04,\,-1.5,\,-1,\,2), & x>0.
	\end{cases}
	\]
	These choices ensure realizability of the initial data. Figures \ref{Fig:1dShockTube_1} and \ref{Fig:1dShockTube_2} shows the results at $t=0.2$ for $\tau=\infty$ and $\tau=0.05$, respectively, including the moments and several primitive variables $\mathbf W$. For both closures, the solutions computed on 800 uniform cells closely match the reference solutions on 8000 cells, confirming the accuracy and robustness of the proposed realizability-preserving schemes. Moreover, no spurious oscillations are observed.
\begin{figure}[!htbp]
	\centering 
	\includegraphics[width=1.0\linewidth]{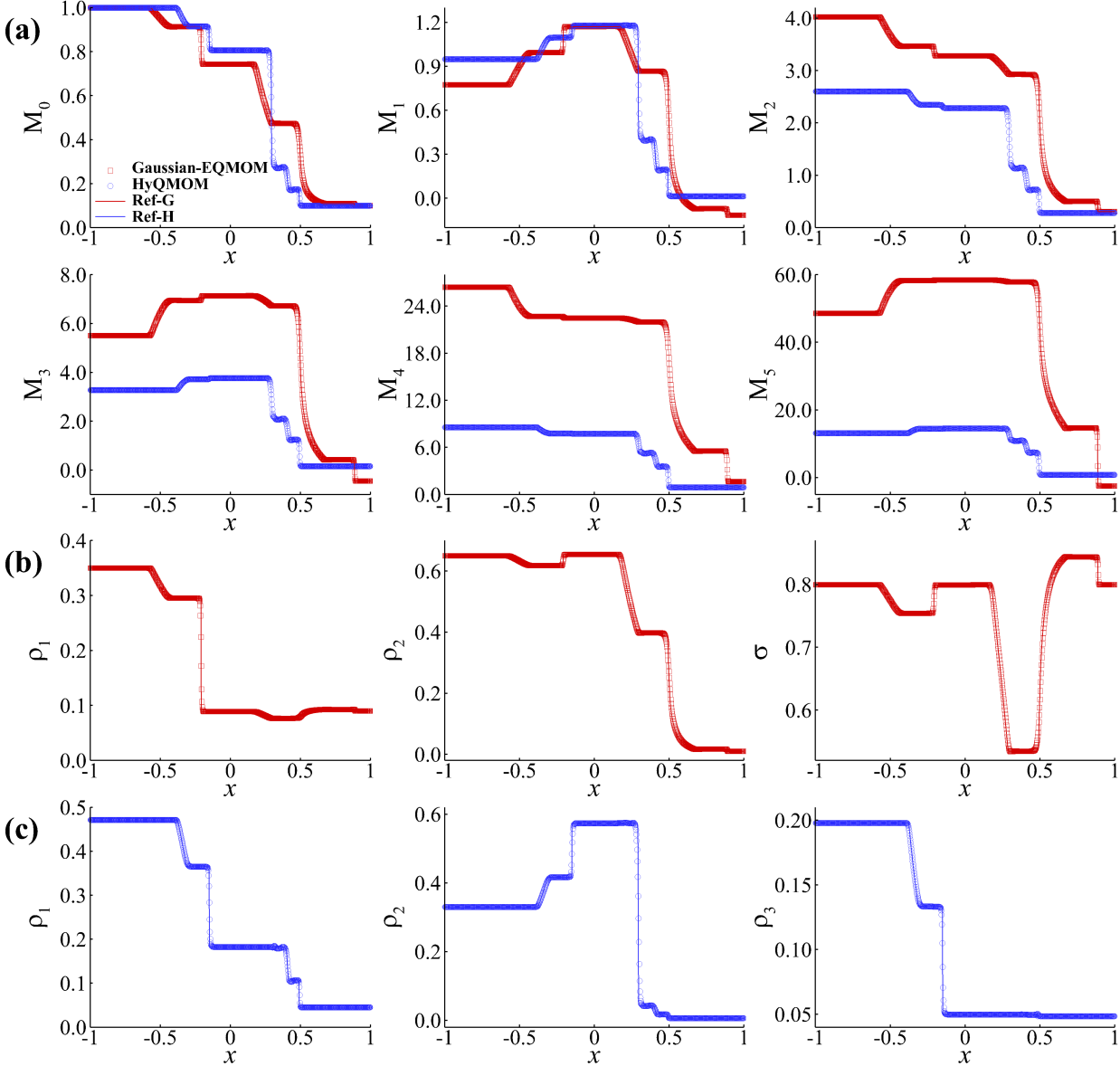}
	\caption{Numerical results for Example~\ref{ex:1dShockTube} at $t=0.2$ with $\tau=\infty$ (no source). 
		(a): computed moments $\{M_\ell(t=1,x)\}_{\ell=0}^{5}$ computed with the two closures (each using its own initial data). 
		(b): $\rho_1$, $\rho_2$, and $\sigma$ for the Gaussian-EQMOM system. 
		(c): $\rho_1$, $\rho_2$, and $\rho_3$ for the HyQMOM system.   
	} \label{Fig:1dShockTube_1}
\end{figure}

\begin{figure}[!htbp]
    \centering 
    \includegraphics[width=1.0\linewidth]{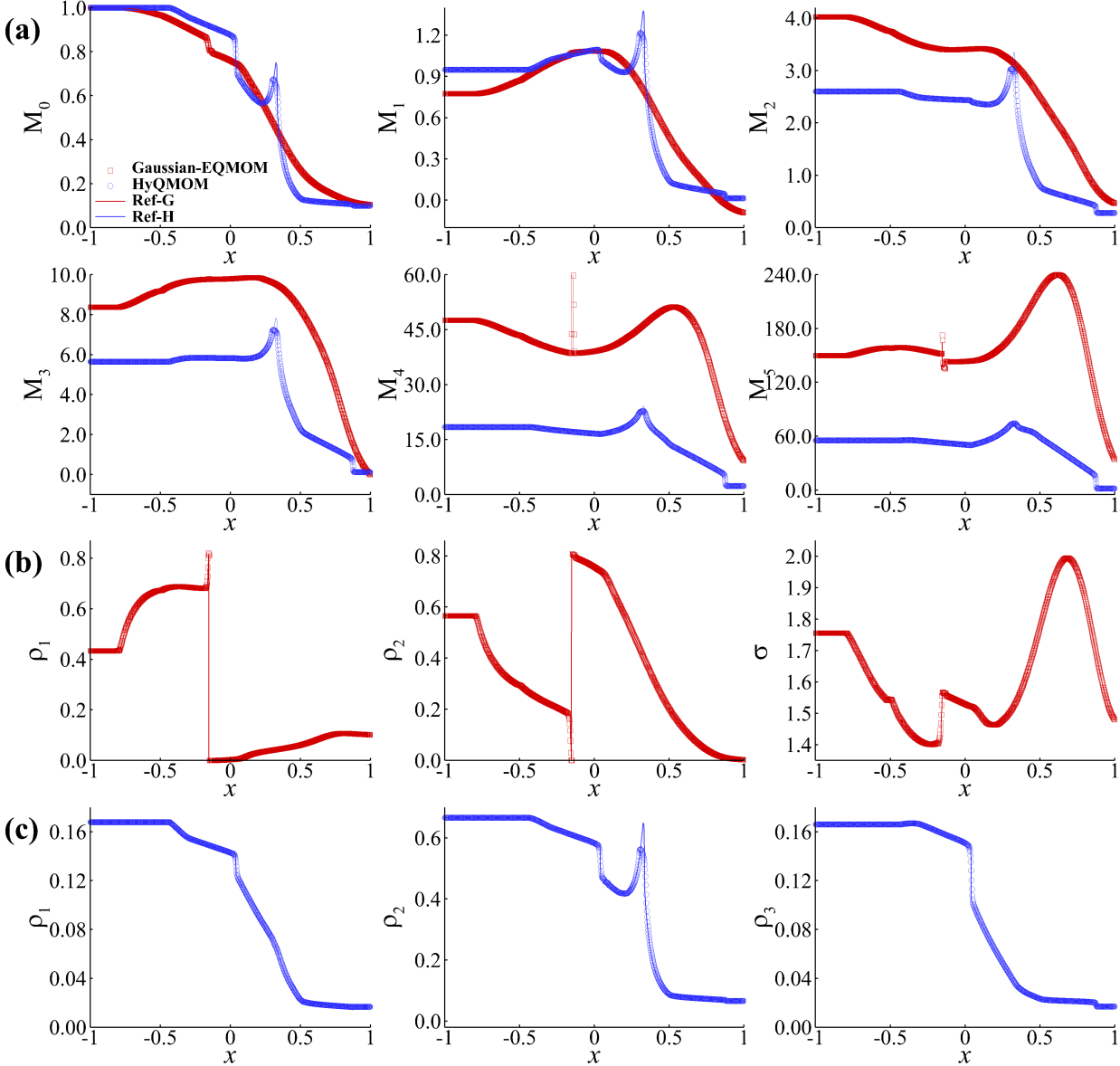}
       \caption{Numerical results for Example~\ref{ex:1dShockTube} at $t=0.2$ with $\tau=0.05$ . 
    	 (a): computed moments $\{M_\ell(t=1,x)\}_{\ell=0}^{5}$ computed with the two closures (each using its own initial data).  
    	 (b): $\rho_1$, $\rho_2$, and $\sigma$ for the Gaussian-EQMOM system. 
    	 (c): $\rho_1$, $\rho_2$, and $\rho_3$ for the HyQMOM system.  
    	 } \label{Fig:1dShockTube_2}
\end{figure}
\end{exmp}

\begin{exmp}[Shu--Osher problem]\label{ex:1dShuOsher}
	This example examines the interaction between a shock and high-frequency sinusoidal waves, extending the classical Shu--Osher problem for the compressible Euler equations \cite{shu1989efficient}. The computational domain is $[-5,5]$, with inflow and outflow boundary conditions imposed on the left and right boundaries, respectively. We consider \eqref{eq_mom_trunc} with $\tau=\infty$ (no source term) and $\tau=0.05$. As in the previous examples, the discontinuous initial data $\mathbf M(0,x)$ are specified separately for the two closures: 
    \begin{equation*}
        \left(\rho_1,\rho_2,v_1,v_2,\sigma\right)=
	\begin{cases}
	   (1,1,2.5,1,0.5),&x<-4,
	   \\
	   (1+0.1\sin(5x),1+0.2\sin(5x),-0.5,2,0.5),&x>-4
	\end{cases}
    \end{equation*}
    for the Gaussian-EQMOM closure system, and
    \begin{equation*}
        \left(\rho_1,\rho_2,\rho_3,v_1,v_3\right)=
	\begin{cases}
	   (1,1,1,2.5,1),&x<-4,
	   \\
	   (1+0.1\sin(5x),1+0.2\sin(5x),1+0.3\sin(5x),-0.5,2),&x>-4
	\end{cases}
    \end{equation*} 
    for the HyQMOM closure system with the velocity $v_2=0$. 
    The numerical solutions at $t=1$ are shown in Figures~\ref{Fig:1dShuOsher_1} and \ref{Fig:1dShuOsher_2} for $\tau=\infty$ and $\tau=0.05$, respectively. In both cases, the results computed by our realizability-preserving schemes on 800 uniform cells agree closely with the reference solutions on 8000 cells and exhibit no spurious oscillations. Moreover, the weights $\rho_i$ remain positive for both closure systems throughout the computation, demonstrating the effectiveness and robustness of the proposed schemes. 
    \begin{figure}[!htbp]
    	\centering  
    	\includegraphics[width=1.0\linewidth]{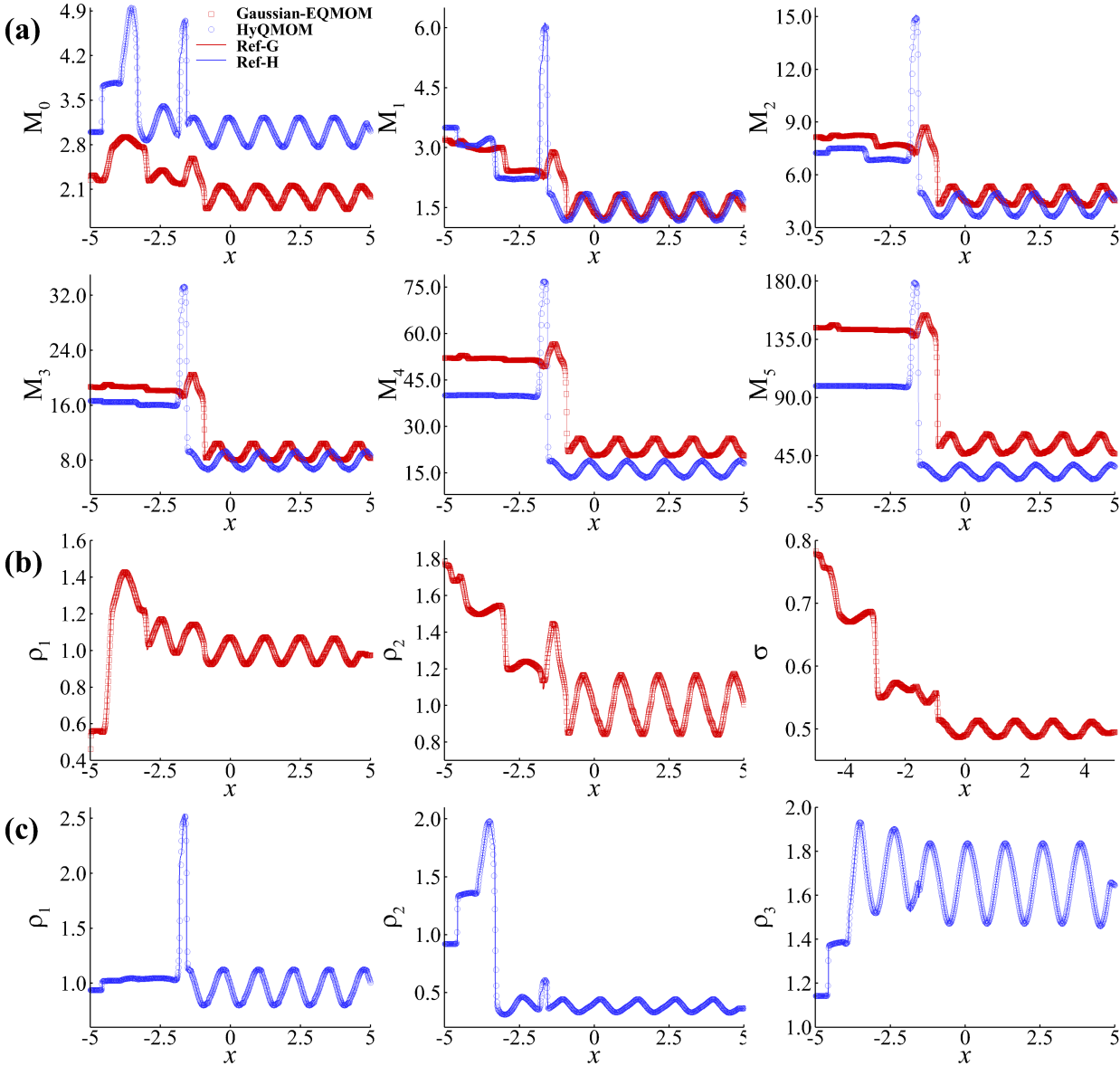}
    	\caption{Numerical results of Example~\ref{ex:1dShuOsher} at $t=1$ with $\tau=\infty$ (no source term). 
    		(a): computed moments $\{M_\ell(t=1,x)\}_{\ell=0}^{5}$ for the two closure systems (each with its own initial data). 
    		(b): $\rho_1$, $\rho_2$, and $\sigma$ for the Gaussian-EQMOM case. 
    		(c): $\rho_1$, $\rho_2$, and $\rho_3$ for the HyQMOM case. 
    	}
    	\label{Fig:1dShuOsher_1}
    \end{figure}
    
    \begin{figure}[!htbp]
    	\centering  
    	\includegraphics[width=1.0\linewidth]{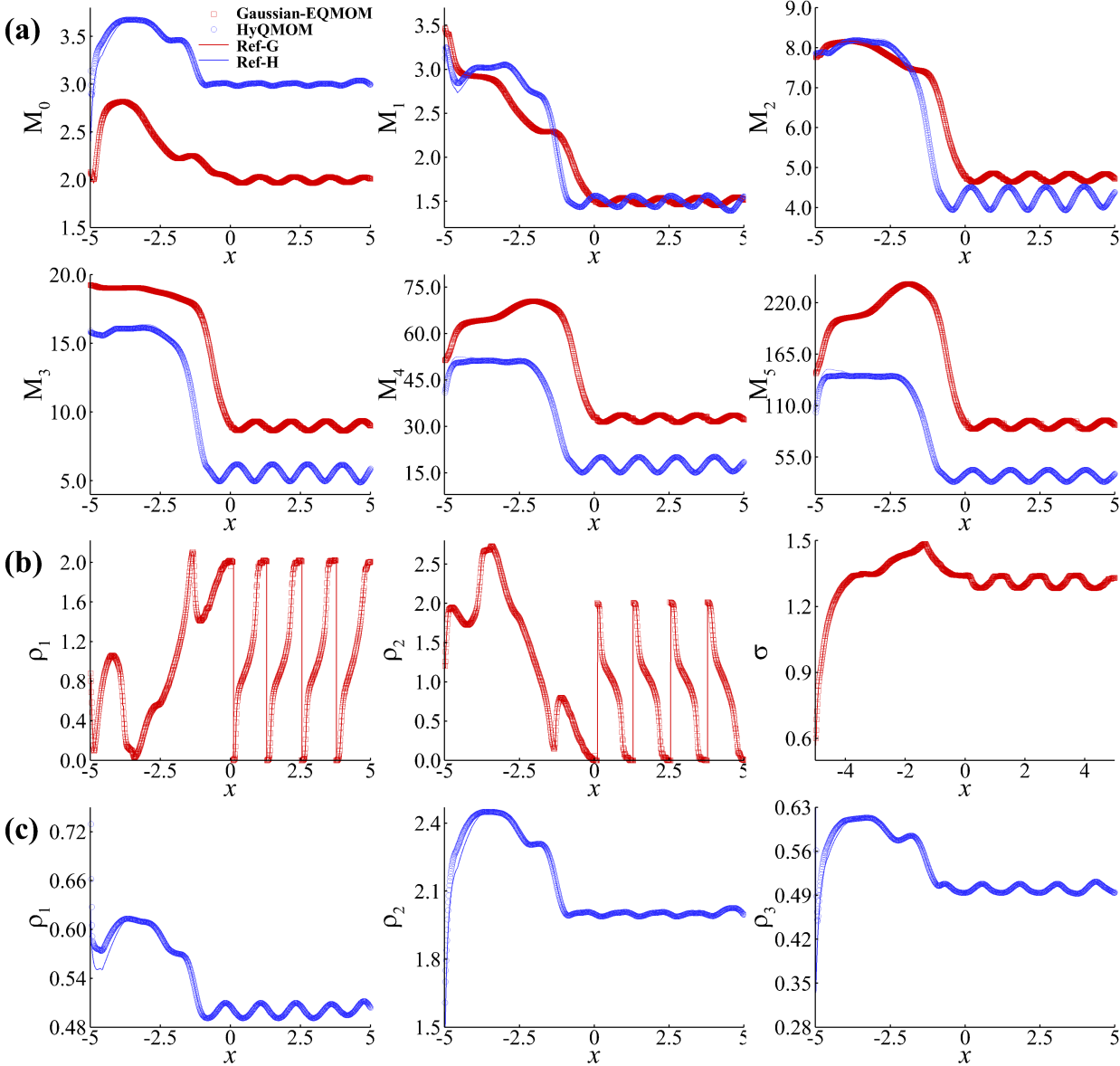}
    	\caption{Numerical results of Example~\ref{ex:1dShuOsher} at $t=1$ with $\tau=0.05$. 
    		(a): computed moments $\{M_\ell(t=1,x)\}_{\ell=0}^{5}$ for the two closure models (each with its own initial data).  
    		(b): $\rho_1$, $\rho_2$, and $\sigma$ for the Gaussian-EQMOM case.  
    		(c): $\rho_1$, $\rho_2$, and $\rho_3$ for the HyQMOM case. 
    	} \label{Fig:1dShuOsher_2}
    \end{figure}
\end{exmp}

\begin{exmp}[Double-rarefaction problem]\label{ex:1dDoubleRare}
	This Riemann problem involves two rarefaction waves on the domain $[-1,1]$ with discontinuous initial data and outflow boundary conditions. We solve the model \eqref{eq_mom_trunc} with $\tau=\infty$ (no source term) and with $\tau=0.05$, respectively. The initial conditions $\bfM(0,x)$ are specified separately for the two closure systems:
    \begin{equation*}
        \text{Gaussian-EQMOM:}\quad
        \left(\rho_1,\rho_2,v_1,v_2,\sigma\right)=
	\begin{cases}
            (0.5,0.5,-5,1,1),&x<0,
            \\
            (0.5,0.5,-1,5,1),&x>0,	
	\end{cases}
    \end{equation*}
    \begin{equation*}
        \text{HyQMOM:}\quad
	\left(\rho_1,\rho_2,\rho_3,v_1,v_2,v_3\right)=
	\begin{cases}
            (0.05,0.9,0.05,-5,-2,1),&x<0.
            \\
            (0.05,0.9,0.05,-1,2,5),&x>0,	   
	\end{cases}
    \end{equation*} 
This problem is challenging due to the presence of low-density regions. Using the realizability-preserving scheme, the results in Figures \ref{Fig:1dDoubleRare_1} and \ref{Fig:1dDoubleRare_2} demonstrate good robustness; in particular, the weights $\rho_i$ remain positive throughout this demanding test.

\begin{figure}[!htbp]
	\centering  
	\includegraphics[width=1.0\linewidth]{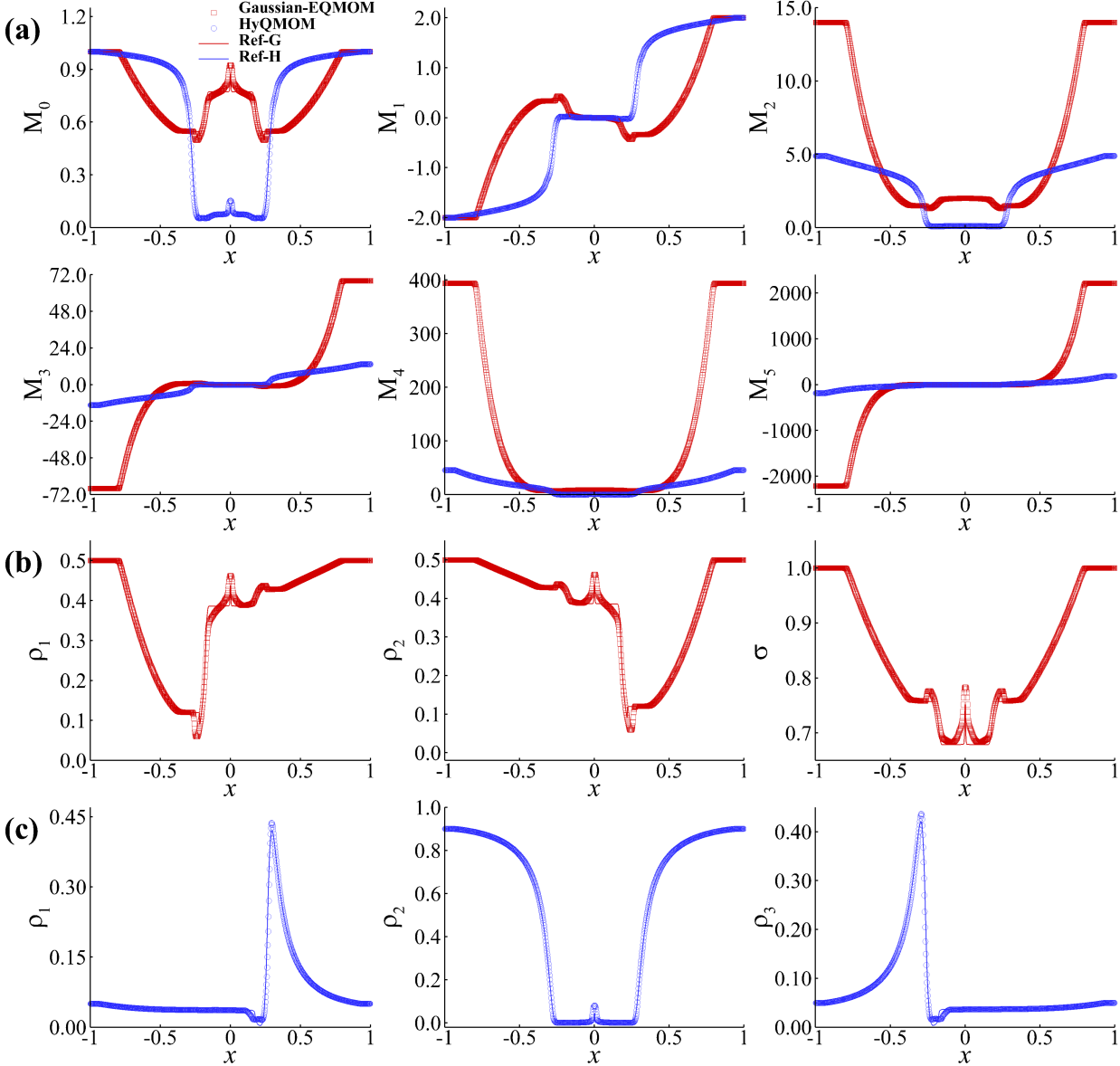}
	\caption{Numerical results of Example~\ref{ex:1dDoubleRare} at $t=0.12$ with $\tau=\infty$ (no source term). 
		(a): computed moments $\{M_\ell(t=1,x)\}_{\ell=0}^{5}$ for the two closure models (each with its own initial data). 
		(b): $\rho_1$, $\rho_2$, and $\sigma$ for the Gaussian-EQMOM case.  
		(c): $\rho_1$, $\rho_2$, and $\rho_3$ for the HyQMOM case. 
	}
	\label{Fig:1dDoubleRare_1}	
\end{figure}

\begin{figure}
	\centering  
	\includegraphics[width=1.0\linewidth]{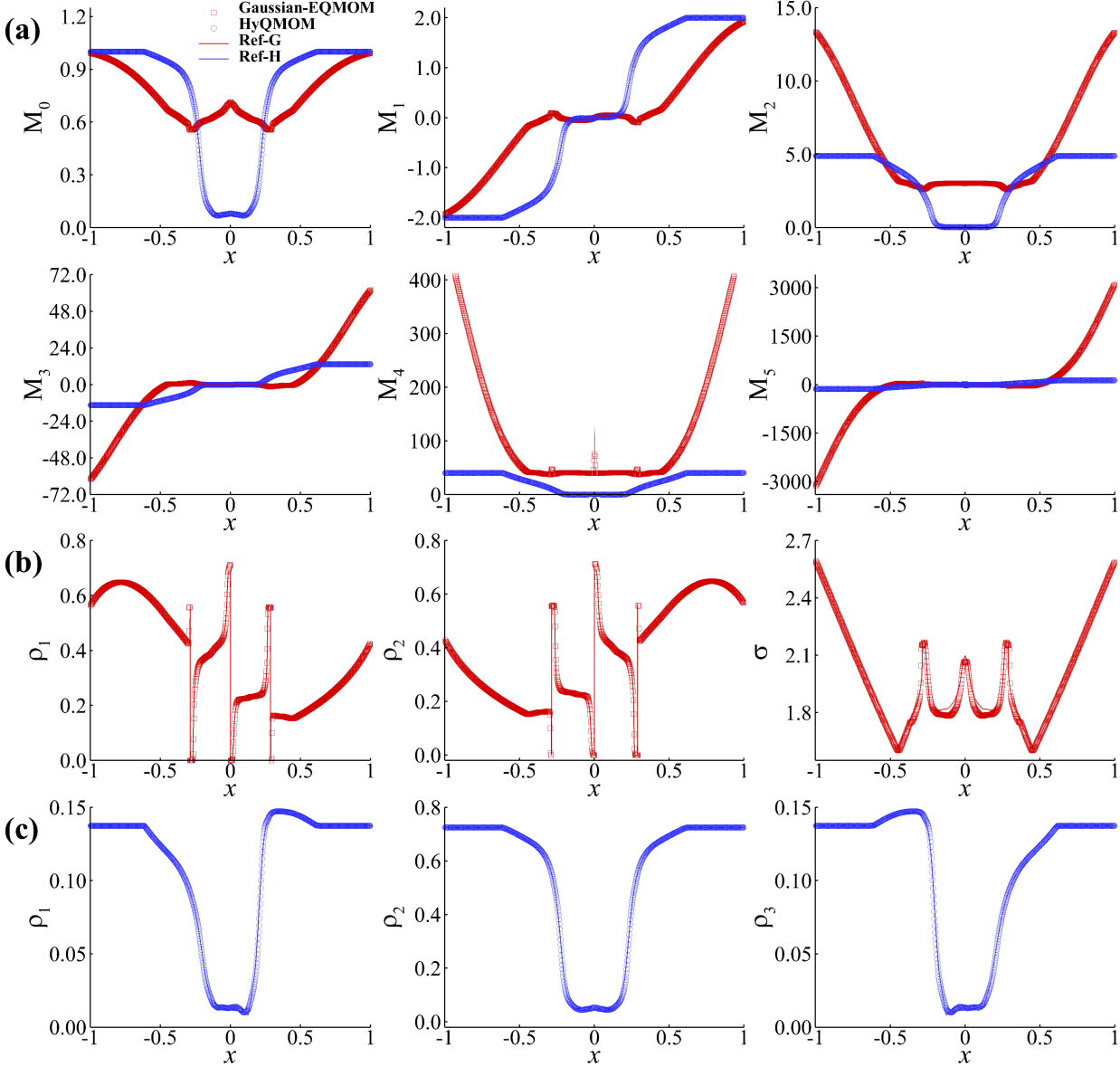}
	\caption{Numerical results of Example~\ref{ex:1dDoubleRare} at $t=0.12$ with $\tau=0.05$.
		(a): computed moments $\{M_\ell(t=1,x)\}_{\ell=0}^{5}$ for the two closure systems (each with its own initial data).  
		(b): $\rho_1$, $\rho_2$, and $\sigma$ for the Gaussian-EQMOM case. 
		(c): $\rho_1$, $\rho_2$, and $\rho_3$ for the HyQMOM case. 
	}
	\label{Fig:1dDoubleRare_2}
\end{figure} 
\end{exmp}

\section{Conclusion}\label{sec5} 

 We presented a provably realizability-preserving finite volume method for hyperbolic five-moment QBMM closures of kinetic equations, with a focus on the two-node Gaussian-EQMOM and three-point HyQMOM systems. It is based on a GQL-inspired reformulation that converts nonlinear realizability constraints into a nonnegative quadratic form in the moments, yielding bilinear inequalities enforced at the flux level via an HLL construction with closure-consistent wave-speed estimates. 
 From a multiscale standpoint, the theory provides realizability-preserving CFL conditions that remain meaningful across relaxation regimes and supports stiff source terms through a semi-implicit variant that inherits the collisionless CFL. The resulting schemes thus bridge kinetic and near-fluid regimes while maintaining positivity of QBMM weights and robustness in low-density regions. 
 Comprehensive experiments confirm the theory: we observe second-order convergence on smooth data and robust, oscillation-free resolution of discontinuities for both closures, including challenging low-density configurations. In all cases, lower-order moments produced by Gaussian-EQMOM and HyQMOM closely agree, and realizability is preserved.
 
The proposed framework extends naturally to higher-order space--time discretizations and to other QBMM variants. Future directions include extending to broader classes of moment-closure models, multidimensional formulations, and realizability-preserving discontinuous Galerkin and weighted essentially non-oscillatory (WENO) schemes tailored to multiscale kinetic--fluid applications.

\bibliography{reference}

\begin{thebibliography}{10}

\bibitem{alldredge2015realizability}
{\sc G.~Alldredge and F.~Schneider}, {\em {A realizability-preserving
  discontinuous Galerkin scheme for entropy-based moment closures for linear
  kinetic equations in one space dimension}}, Journal of Computational Physics,
  295 (2015), pp.~665--684.

\bibitem{BGK1954}
{\sc P.~L. Bhatnagar, E.~P. Gross, and M.~Krook}, {\em {A model for collision
  processes in gases. I. Small amplitude processes in charged and neutral
  one-component systems}}, Physical review, 94 (1954), p.~511.

\bibitem{chalons2017multivariate}
{\sc C.~Chalons, R.~O. Fox, F.~Laurent, M.~Massot, and A.~Vi{\'e}}, {\em
  {Multivariate Gaussian extended quadrature method of moments for turbulent
  disperse multiphase flow}}, Multiscale Modeling \& Simulation, 15 (2017),
  pp.~1553--1583.

\bibitem{chalons2012beyond}
{\sc C.~Chalons, D.~Kah, and M.~Massot}, {\em {Beyond pressureless gas
  dynamics: Quadrature-based velocity moment models}}, Communications in
  Mathematical Sciences, 10 (2012), pp.~1241--1272.

\bibitem{chen2025poisson}
{\sc Y.~Chen, Q.~Huang, W.-A. Yong, and R.~Zhang}, {\em Poisson quadrature
  method of moments for 2d kinetic equations with velocity of constant
  magnitude}, Multiscale Modeling \& Simulation, 23 (2025), pp.~577--610.

\bibitem{cui2023classic}
{\sc S.~Cui, S.~Ding, and K.~Wu}, {\em {Is the classic convex decomposition
  optimal for bound-preserving schemes in multiple dimensions?}}, Journal of
  Computational Physics, 476 (2023), p.~111882.

\bibitem{cui2024optimal}
{\sc S.~Cui, S.~Ding, and K.~Wu}, {\em {On optimal cell average decomposition
  for high-order bound-preserving schemes of hyperbolic conservation laws}},
  SIAM Journal on Numerical Analysis, 62 (2024), pp.~775--810.

\bibitem{desjardins2008quadrature}
{\sc O.~Desjardins, R.~O. Fox, and P.~Villedieu}, {\em {A quadrature-based
  moment method for dilute fluid-particle flows}}, Journal of Computational
  Physics, 227 (2008), pp.~2514--2539.

\bibitem{dreyer1987maximisation}
{\sc W.~Dreyer}, {\em {Maximisation of the entropy in non-equilibrium}},
  Journal of Physics A: Mathematical and General, 20 (1987), p.~6505.

\bibitem{fox2008quadrature}
{\sc R.~O. Fox}, {\em {A quadrature-based third-order moment method for dilute
  gas-particle flows}}, Journal of Computational Physics, 227 (2008),
  pp.~6313--6350.

\bibitem{fox2009higher}
{\sc R.~O. Fox}, {\em {Higher-order quadrature-based moment methods for kinetic
  equations}}, Journal of Computational Physics, 228 (2009), pp.~7771--7791.

\bibitem{fox2022hyqmom}
{\sc R.~O. Fox and F.~Laurent}, {\em Hyperbolic quadrature method of moments
  for the one-dimensional kinetic equation}, SIAM Journal on Applied
  Mathematics, 82 (2022), pp.~750--771.

\bibitem{fox2018conditional}
{\sc R.~O. Fox, F.~Laurent, and A.~Vi{\'e}}, {\em {Conditional hyperbolic
  quadrature method of moments for kinetic equations}}, Journal of
  Computational Physics, 365 (2018), pp.~269--293.

\bibitem{grad1949kinetic}
{\sc H.~Grad}, {\em {On the kinetic theory of rarefied gases}}, Communications
  on pure and applied mathematics, 2 (1949), pp.~331--407.

\bibitem{guermond2018second}
{\sc J.-L. Guermond, M.~Nazarov, B.~Popov, and I.~Tomas}, {\em {Second-order
  invariant domain preserving approximation of the Euler equations using convex
  limiting}}, SIAM Journal on Scientific Computing, 40 (2018),
  pp.~A3211--A3239.

\bibitem{guermond2019invariant}
{\sc J.-L. Guermond, B.~Popov, and I.~Tomas}, {\em {Invariant domain preserving
  discretization-independent schemes and convex limiting for hyperbolic
  systems}}, Computer Methods in Applied Mechanics and Engineering, 347 (2019),
  pp.~143--175.

\bibitem{hu2017asymptotic}
{\sc J.~Hu, S.~Jin, and Q.~Li}, {\em {Asymptotic-preserving schemes for
  multiscale hyperbolic and kinetic equations}}, in Handbook of Numerical
  Analysis, vol.~18, Elsevier, 2017, pp.~103--129.

\bibitem{hu2019second}
{\sc J.~Hu and R.~Shu}, {\em {A second-order asymptotic-preserving and
  positivity-preserving exponential Runge--Kutta method for a class of stiff
  kinetic equations}}, Multiscale Modeling \& Simulation, 17 (2019),
  pp.~1123--1146.

\bibitem{hu2018asymptotic}
{\sc J.~Hu, R.~Shu, and X.~Zhang}, {\em {Asymptotic-preserving and
  positivity-preserving implicit-explicit schemes for the stiff BGK equation}},
  SIAM Journal on Numerical Analysis, 56 (2018), pp.~942--973.

\bibitem{huang2018bound}
{\sc J.~Huang and C.-W. Shu}, {\em {Bound-preserving modified exponential
  Runge--Kutta discontinuous Galerkin methods for scalar hyperbolic equations
  with stiff source terms}}, Journal of Computational Physics, 361 (2018),
  pp.~111--135.

\bibitem{huang2020stability}
{\sc Q.~Huang, S.~Li, and W.-A. Yong}, {\em {Stability analysis of
  quadrature-based moment methods for kinetic equations}}, SIAM Journal on
  Applied Mathematics, 80 (2020), pp.~206--231.

\bibitem{jin2022asymptotic}
{\sc S.~Jin}, {\em {Asymptotic-preserving schemes for multiscale physical
  problems}}, Acta Numerica, 31 (2022), pp.~415--489.

\bibitem{johnson2023positivity}
{\sc E.~R. Johnson, J.~A. Rossmanith, and C.~Vaughan}, {\em
  {Positivity-Preserving Lax--Wendroff Discontinuous Galerkin Schemes for
  Quadrature-Based Moment-Closure Approximations of Kinetic Models}}, Journal
  of Scientific Computing, 95 (2023), p.~19.

\bibitem{kah2012high}
{\sc D.~Kah, F.~Laurent, M.~Massot, and S.~Jay}, {\em A high order moment
  method simulating evaporation and advection of a polydisperse liquid spray},
  Journal of Computational Physics, 231 (2012), pp.~394--422.

\bibitem{kuzmin2002flux}
{\sc D.~Kuzmin and S.~Turek}, {\em {Flux correction tools for finite
  elements}}, Journal of Computational Physics, 175 (2002), pp.~525--558.

\bibitem{laurent2017realizable}
{\sc F.~Laurent and T.~T. Nguyen}, {\em {Realizable second-order finite-volume
  schemes for the advection of moment sets of the particle size distribution}},
  Journal of Computational Physics, 337 (2017), pp.~309--338.

\bibitem{lenard1960bogoliubov}
{\sc A.~Lenard}, {\em {On Bogoliubov's kinetic equation for a spatially
  homogeneous plasma}}, Annals of Physics, 10 (1960), pp.~390--400.

\bibitem{levermore1996moment}
{\sc C.~D. Levermore}, {\em {Moment closure hierarchies for kinetic theories}},
  Journal of Statistical Physics, 83 (1996), pp.~1021--1065.

\bibitem{lynden1967statistical}
{\sc D.~Lynden-Bell}, {\em {Statistical mechanics of violent relaxation in
  stellar systems}}, Monthly Notices of the Royal Astronomical Society, Vol.
  136, p. 101, 136 (1967), p.~101.

\bibitem{marchisio2005solution}
{\sc D.~L. Marchisio and R.~O. Fox}, {\em {Solution of population balance
  equations using the direct quadrature method of moments}}, Journal of Aerosol
  Science, 36 (2005), pp.~43--73.

\bibitem{mieussens2000discrete}
{\sc L.~Mieussens}, {\em {Discrete velocity model and implicit scheme for the
  BGK equation of rarefied gas dynamics}}, Mathematical Models and Methods in
  Applied Sciences, 10 (2000), pp.~1121--1149.

\bibitem{patel2019three}
{\sc R.~G. Patel, O.~Desjardins, and R.~O. Fox}, {\em {Three-dimensional
  conditional hyperbolic quadrature method of moments}}, Journal of
  Computational Physics: X, 1 (2019), p.~100006.

\bibitem{PIGOU2018243}
{\sc M.~Pigou, J.~Morchain, P.~Fede, M.-I. Penet, and G.~Laronze}, {\em New
  developments of the extended quadrature method of moments to solve population
  balance equations}, Journal of Computational Physics, 365 (2018),
  pp.~243--268.

\bibitem{ruggeri1993extended}
{\sc T.~Ruggeri}, {\em {Extended thermodynamics}}, Springer Tracts in Nat.
  Phil, 37 (1993).

\bibitem{sahu2020full}
{\sc R.~Sahu, A.~R. Mansour, and K.~Hara}, {\em {Full fluid moment model for
  low temperature magnetized plasmas}}, Physics of Plasmas, 27 (2020).

\bibitem{shohat1943problem}
{\sc J.~A. Shohat and J.~D. Tamarkin}, {\em {The problem of moments}}, no.~1,
  American Mathematical Soc., 1943.

\bibitem{shu2020class}
{\sc C.-W. Shu}, {\em A class of bound-preserving high order schemes: The main
  ideas and recent developments.}, In Susanne~C. Brenner, Igor~E. Shparlinski,
  Chi-Wang Shu, and Daniel~B. Szyld, editors, { 75 Years of Mathematics of
  Computation}, volume 754 of { Contemporary Mathematics}, page 247. American
  Mathematical Society, 2020.

\bibitem{shu1989efficient}
{\sc C.-W. Shu and S.~Osher}, {\em {Efficient implementation of essentially
  non-oscillatory shock-capturing schemes, II}}, Journal of computational
  physics, 83 (1989), pp.~32--78.

\bibitem{TAUNAY2023111700}
{\sc P.-Y.~C. Taunay and M.~E. Mueller}, {\em Quadrature-based moment methods
  for kinetic plasma simulations}, Journal of Computational Physics, 473
  (2023), p.~111700.

\bibitem{van1982comparative}
{\sc G.~D. van Albada, B.~Van~Leer, and W.~Roberts~Jr}, {\em {A comparative
  study of computational methods in cosmic gas dynamics}}, Astronomy and
  Astrophysics, vol. 108, no. 1, Apr. 1982, p. 76-84., 108 (1982), pp.~76--84.

\bibitem{vikas2013realizable}
{\sc V.~Vikas, Z.~J. Wang, and R.~O. Fox}, {\em {Realizable high-order
  finite-volume schemes for quadrature-based moment methods applied to
  diffusion population balance equations}}, Journal of Computational Physics,
  249 (2013), pp.~162--179.

\bibitem{vikas2011realizable}
{\sc V.~Vikas, Z.~J. Wang, A.~Passalacqua, and R.~O. Fox}, {\em {Realizable
  high-order finite-volume schemes for quadrature-based moment methods}},
  Journal of Computational Physics, 230 (2011), pp.~5328--5352.

\bibitem{williams1958spray}
{\sc F.~A. Williams}, {\em {Spray combustion and atomization}}, The physics of
  fluids, 1 (1958), pp.~541--545.

\bibitem{wu2018positivity}
{\sc K.~Wu}, {\em {Positivity-preserving analysis of numerical schemes for
  ideal magnetohydrodynamics}}, SIAM Journal on Numerical Analysis, 56 (2018),
  pp.~2124--2147.

\bibitem{wu2019provably}
{\sc K.~Wu and C.-W. Shu}, {\em {Provably positive high-order schemes for ideal
  magnetohydrodynamics: analysis on general meshes}}, Numerische Mathematik,
  142 (2019), pp.~995--1047.

\bibitem{wu2023geometric}
{\sc K.~Wu and C.-W. Shu}, {\em {Geometric quasilinearization framework for
  analysis and design of bound-preserving schemes}}, SIAM Review, 65 (2023),
  pp.~1031--1073.

\bibitem{xu2010unified}
{\sc K.~Xu and J.-C. Huang}, {\em {A unified gas-kinetic scheme for continuum
  and rarefied flows}}, Journal of Computational Physics, 229 (2010),
  pp.~7747--7764.

\bibitem{xu2014parametrized}
{\sc Z.~Xu}, {\em {Parametrized maximum principle preserving flux limiters for
  high order schemes solving hyperbolic conservation laws: one-dimensional
  scalar problem}}, Mathematics of Computation, 83 (2014), pp.~2213--2238.

\bibitem{xu2017bound}
{\sc Z.~Xu and X.~Zhang}, {\em Bound-preserving high-order schemes}, in
  Handbook of numerical analysis, vol.~18, Elsevier, 2017, pp.~81--102.

\bibitem{zalesak1979fully}
{\sc S.~T. Zalesak}, {\em {Fully multidimensional flux-corrected transport
  algorithms for fluids}}, Journal of computational physics, 31 (1979),
  pp.~335--362.

\bibitem{zhang2024hyqmom}
{\sc R.~Zhang, Y.~Chen, Q.~Huang, and W.-A. Yong}, {\em Dissipativeness of the
  hyperbolic quadrature method of moments for kinetic equations}, 2024,
  \url{https://arxiv.org/abs/2406.13931}.

\bibitem{Zxx}
{\sc X.~Zhang}, {\em {On positivity-preserving high order discontinuous
  {G}alerkin schemes for compressible Navier-Stokes equations}}, Journal of
  Computational Physics, 328 (2017), pp.~301--343.

\bibitem{ZS1}
{\sc X.~Zhang and C.-W. Shu}, {\em {On maximum-principle-satisfying high order
  schemes for scalar conservation laws}}, Journal of Computational Physics, 229
  (2010), pp.~3091--3120.

\bibitem{ZS2}
{\sc X.~Zhang and C.-W. Shu}, {\em On positivity-preserving high order
  discontinuous {Galerkin} schemes for compressible {E}uler equations on
  rectangular meshes}, Journal of Computational Physics, 229 (2010),
  pp.~8918--8934.

\bibitem{ZS4}
{\sc X.~Zhang and C.-W. Shu}, {\em {Maximum-principle-satisfying and
  positivity-preserving high-order schemes for conservation laws: survey and
  new developments}}, Proceedings of the Royal Society A: Mathematical,
  Physical and Engineering Sciences, 467 (2011), pp.~2752--2776.

\end{thebibliography}
\bibliographystyle{siamplain}

\end{document}